\documentclass{amsart}                     

\usepackage{amsthm}
\usepackage{amsmath,amssymb,amsfonts,tikz}

\usepackage{enumitem}

\usepackage[a4paper,headheight=0.3cm]{geometry}
\usepackage{fancyhdr}
\usepackage{url}
\lhead{}
\chead{Nathan Huntley and Matthias C. M. Troffaes. Normal Form Backward Induction for Decision Trees with Coherent Lower Previsions. \emph{Annals of Operations Research}, 195(1):111-134, 2012. \url{http://dx.doi.org/10.1007/s10479-011-0968-2}}
\rhead{}
\lfoot{}
\cfoot{}
\rfoot{}
\usepackage[pdftitle={Normal Form Backward Induction for Decision Trees with Coherent Lower Previsions},%
pdfauthor={Nathan Huntley and Matthias C. M. Troffaes},%
pdfkeywords={backward induction, decision tree, lower prevision, sequential decision making, choice function, maximality, E-admissibility, interval dominance, maximin, imprecise probability}]{hyperref}

\makeatletter

\def\normformnew#1[#2]{
\@ifnextchar({%
\decsuba[{#1}^{}_{#2}\left(]}%
{
{#1}^{}_{#2}}}
\def\decsuba[#1]#2{
\decsubb[#1]}

\def\decsubb[#1]#2,#3{
\@ifnextchar({
\decsuba[#1{}^{#2}_{#3}\left(]}%
{
\@ifnextchar){\decsubc[#1{}^{#2}_{#3}]}
{
\decsubd[#1]{#2}{#3}
}}}

\def\decsubc[#1]#2{
\@ifnextchar){
\decsubc[#1\right)]}%
{\@ifnextchar({
\decsuba[#1\right)\left(]}%
{
#1\right)}}}

\def\decsubd[#1]#2#3#4{
\@ifnextchar({
\decsuba[#1{}^{#2}_{#3#4}\left(]}%
{
\@ifnextchar){\decsubc[#1{}^{#2}_{#3#4}]}
{
\decsubd[#1]#2{#3#4}
}}}

\newcommand{\treenotation}[1]{\treenotationevenbetter{#1}}

\newcommand{\treenotationevenbetter}[1]{
\treenotationstart{#1}}

\def\treenotationstart#1{
\@ifnextchar[{
\treenotationsqsq{#1}}%
{
\@ifnextchar({
\treenotationsqro{#1}
}
{
#1}}}%

\def\hiddenindex{\phantom{1}}

\def\treenotationsqsq#1[#2]{%
\@ifnextchar[{
\treenotationsqsq{#1{}^{\hiddenindex}_{#2}}}%
{
\@ifnextchar({
\treenotationsqro{#1{}^{\hiddenindex}_{#2}}
}
{
#1{}^{\hiddenindex}_{#2}}}}%

\def\treenotationsqro#1(#2){%
\@ifnextchar[{
\treenotationrosq{#1{}^{#2}}}%
{
\@ifnextchar({
\treenotationroro{#1{}^{#2}_{\hiddenindex}}
}
{
#1{}_{\hiddenindex}^{#2}}}}%

\def\treenotationroro#1(#2){%
\@ifnextchar[{
\treenotationrosq{#1{}^{#2}}}%
{
\@ifnextchar({
\treenotationroro{#1{}^{#2}_{\hiddenindex}}
}
{
#1{}_{\hiddenindex}^{#2}}}}%

\def\treenotationrosq#1[#2]{%
\@ifnextchar[{
\treenotationsqsq{#1_{#2}}}%
{
\@ifnextchar({
\treenotationsqro{#1_{#2}}
}
{
#1_{#2}}}}%

\makeatother

\DeclareMathOperator{\opt}{opt}

\newcommand{\lpr}{\underline{P}}
\newcommand{\upr}{\overline{P}}

\newcommand{\pspace}{\Omega}

\newcommand{\domlinprevs}{\mathcal{M}}

\newcommand{\decnode}{\treenotation{N}}
\newcommand{\chancenode}{\treenotation{N}}

\newcommand{\event}{\treenotation{E}}


\newcommand{\treeevent}[1]{\mathrm{ev}({#1})}



\newcommand{\lprpref}[1]{>_{\lpr|{#1}}}
\newcommand{\notlprpref}[1]{{\not>}_{\lpr|{#1}}}
\newcommand{\intervalpref}[1]{\sqsupset_{\lpr | {#1}}}
\newcommand{\notintervalpref}[1]{{\not\sqsupset}_{\lpr | {#1}}}

\DeclareMathOperator{\back}{back}
\DeclareMathOperator{\optmax}{opt_{\lprpref{\cdot}}}
\DeclareMathOperator{\backmax}{back_{\optmax}}
\DeclareMathOperator{\backopt}{\back_{\opt}}
\DeclareMathOperator{\optE}{\opt_{\domlinprevs}}
\DeclareMathOperator{\backE}{\back_{\optE}}
\DeclareMathOperator{\optint}{\opt_{\sqsupset_{\lpr | \cdot}}}
\DeclareMathOperator{\optsucc}{\opt_{\succ|\cdot}}
\DeclareMathOperator{\optmaximin}{\opt_{\lpr}}

\newtheorem{lemma}{Lemma}
\newtheorem{proposition}[lemma]{Proposition}
\newtheorem{corollary}[lemma]{Corollary}
\newtheorem{theorem}[lemma]{Theorem}
\newtheorem{example}[lemma]{Example}
\newtheorem{definition}[lemma]{Definition}

\newtheorem{property}{Property}

\newcommand{\tree}{T}
\newcommand{\atree}{U}

\newcommand{\setoftrees}{\mathcal{T}}

\DeclareMathOperator{\subtreeatoper}{st}
\newcommand{\subtreeat}[2]{\subtreeatoper_{#2}(#1)}
\newcommand{\subtree}{\treenotation{\tree}}

\DeclareMathOperator{\normoper}{norm}
\DeclareMathOperator{\normgambles}{gamb}

\DeclareMathOperator{\nfd}{nfd}

\newcommand{\gambplus}{\oplus}
\newcommand{\bigdecnodeunion}{\bigsqcup}
\newcommand{\decnodeunion}{\sqcup}
\newcommand{\bigchancenodemixture}{\bigodot}
\newcommand{\chancenodemixture}{\odot}

\newcommand{\setplus}{\oplus}

\newcommand{\compl}[1]{\overline{#1}}

\begin{document}

\title[Normal Form Backward Induction for Decision Trees with Coh. Lower Previsions]{Normal Form Backward Induction for Decision Trees with Coherent Lower Previsions}
\author{Nathan Huntley}
\address{Durham University, Department of Mathematical Sciences, Science Laboratories, South Road, Durham DH1 3LE, United Kingdom}
\email{nathan.huntley@durham.ac.uk}
\author{Matthias C. M. Troffaes}
\address{Durham University, Department of Mathematical Sciences, Science Laboratories, South Road, Durham DH1 3LE, United Kingdom}
\email{matthias.troffaes@gmail.com}

\keywords{backward induction; decision tree; lower prevision; sequential decision making; choice function; maximality; E-admissibility; interval dominance; maximin; imprecise probability}

\begin{abstract}
We examine normal form solutions of decision trees under typical choice functions induced by lower previsions. For large trees, finding such solutions is hard as very many strategies must be considered. In an earlier paper, we extended backward induction to arbitrary choice functions, yielding far more efficient solutions, and we identified simple necessary and sufficient conditions for this to work. In this paper, we show that backward induction works for maximality and E-admissibility, but not for interval dominance and $\Gamma$-maximin. We also show that, in some situations, a computationally cheap approximation of a choice function can be used, even if the approximation violates the conditions for backward induction; for instance, interval dominance with backward induction will yield at least all maximal normal form solutions.
\end{abstract}

\maketitle
\thispagestyle{fancy}

\section{Introduction}
\label{sec:intro}

In classical decision theory, one aims to maximize expected utility. Such approach requires probabilities for all relevant events. However, when information and knowledge are limited, sadly, the decision maker may not be able to specify or elicit probabilities exactly. To handle this, various theories have been suggested, including lower previsions \cite{1991:walley}, which essentially amount to sets of probabilities.

In non-sequential problems, given a lower prevision, various generalizations of maximizing expected utility exist \cite{2007:troffaes}. Sequential extensions of some of these alternatives have been suggested~\cite{1988:seidenfeld,1999:harmanec,1999:jaffray,2001:augustin,2004:seidenfeld,2005:decooman,2005:kikuti,2007:seidenfeld}, yet not systematically studied. In this paper, we study, systematically, using lower previsions, which decision criteria admit efficient solutions to sequential decision problems, by backward induction, even if probabilities are not exactly known. Our main contribution is that we prove for which criteria backward induction coincides with the usual normal form.

We study very general sequential decision problems: a subject can choose from a set of options, where each option has uncertain consequences, leading to either rewards or more options. Based on her beliefs and preferences, the subject seeks an optimal strategy. Such problems are represented by a \emph{decision tree} 
\cite{1961:raiffa,1985:lindley,2001:clemen}.

When maximizing expected utility, one can solve a decision tree by the usual \emph{normal form method}, or by \emph{backward induction}. First, note that the subject can specify, in advance, her actions in all eventualities. In the normal form, she simply chooses a specification which maximizes her expected utility. However, in larger problems, the number of specifications is gargantuan, and the normal form is not feasible.

Fortunately, backward induction is far more efficient. We find the expected utility at the final decision nodes, and then replace these nodes with the maximum expected utility. The previously penultimate decision nodes are now ultimate, and the process repeats until the root is reached. Backward induction is guaranteed to coincide with the normal form \cite{1961:raiffa} if probabilities are non-zero~\cite[p.~44]{1988:hammond}.

The usual normal form method works easily with decision criteria for lower previsions: apply it to the set of all strategies. Generalizing backward induction is harder, as no single expectation summarizes all relevant information about substrategies, unlike with expected utility. We follow Kikuti et al.~\cite{2005:kikuti}, and instead replace nodes with sets of optimal substrategies, moving from right to left in the tree, eliminating strategies as we go. De~Cooman and Troffaes~\cite{2005:decooman} presented a similar idea for dynamic programming.

In this general setting, normal form and backward induction can differ, as noted by many \cite{1988:seidenfeld,1989:machina,2004:seidenfeld,2005:kikuti,2005:decooman,1999:jaffray,2001:augustin}. However, for some decision criteria the methods always coincide. In~\cite{2011:huntley:subtree:perfectness}, we found conditions for coincidence. In this paper, we expand the work begun in~\cite{2008:huntley:troffaes::impdectrees:smps}, and investigate what works for lower previsions, finding that \emph{maximality} and \emph{E-admissibility} work, but the others do not.

This coincidence is of interest for at least two reasons. First, as mentioned, the normal form is not feasible for larger trees, whereas backward induction can eliminate many strategies early on, hence being far more efficient. Secondly, one might argue that a solution where the two methods differ is philosophically flawed \cite{2011:huntley:subtree:perfectness,1988:hammond,1988:seidenfeld,1990:mcclennen}.

The paper is organized as follows. Section~\ref{sec:dectrees} explains decision trees and introduces notation. Section~\ref{sec:ip:optimality} presents lower previsions and their decision criteria, and demonstrates normal form backward induction on a simple example. Section~\ref{sec:solution} formally defines the two methods, and characterizes their equivalence, which is applied in Section~\ref{sec:ip:application} to lower previsions. Section~\ref{sec:example:two} discusses a larger example. Section~\ref{sec:conclusion} concludes. Readers familiar with decision trees and lower previsions can start with Sections~\ref{sec:example:one} and~\ref{sec:example:two}.

\section{Decision Trees}
\label{sec:dectrees}

\subsection{Definition and Example}

Informally, a decision tree \cite{1985:lindley,2001:clemen} is a
graphical causal representation of decisions, events, and rewards. Decision trees
consist of a rooted tree \cite[p.~92, Sec.~3.2]{1999:gross::graph:theory} of decision nodes, chance nodes, and reward leaves, growing from
left to right. The left hand side corresponds to what happens first,
and the right side to what happens last.

Consider the following example. Tomorrow, a subject is going for
a walk in the lake district. It may rain ($E_1$), or
not ($E_2$). The subject can either take a waterproof ($d_1$), or
not ($d_2$). But the subject may also choose to buy today's newspaper, at cost $c$, to learn about tomorrow's
weather forecast ($d_S$), or not ($d_{\compl{S}}$), before leaving
for the lake district. The
forecast has two possible outcomes: predicting rain ($S_1$), or not
($S_2$).

The corresponding decision tree is depicted in
Figure~\ref{fig:lake:tree:basic}.
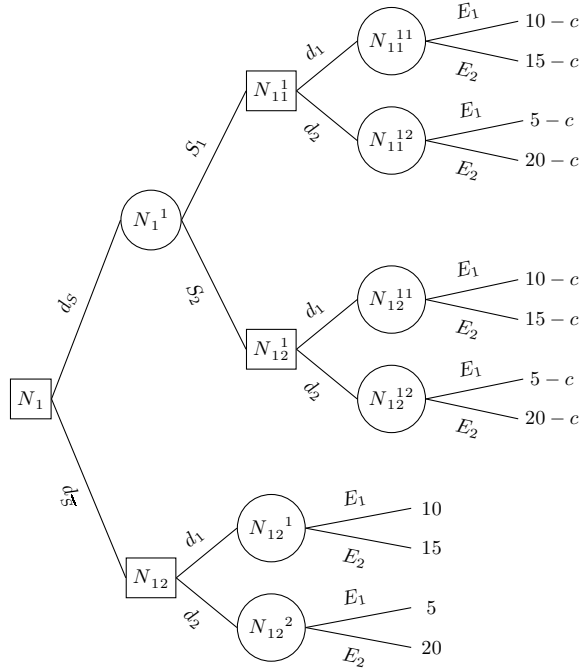
\begin{figure}
  \begin{center}
    \begin{tikzpicture}
      [minimum size=2em,parent anchor=east,child anchor=west,grow'=east,scale=0.75,transform shape]
        \node[draw,rectangle]{$\decnode[1]$}
        [sibling distance=18em,level distance=6em]
        child{
          node[draw,circle]{$\chancenode[1](1)$}
          [sibling distance=13em,level distance=6em]
          child{
            node[draw,rectangle]{$\decnode[1](1)[1]$}
            [sibling distance=5em,level distance=6em]
            child{
              node[draw,circle]{$\chancenode[1](1)[1](1)$}
              [sibling distance=2em,level distance=8em]
              child{
                node{$10-c$}
                edge from parent
                node[above,sloped]{$E_1$}
              }
              child{
                node{$15-c$}
                edge from parent
                node[below,sloped]{$E_2$}
              }
              node{}
              edge from parent
              node[above,sloped]{$d_1$}
            }
            child{
              node[draw,circle]{$\chancenode[1](1)[1](2)$}
              [sibling distance=2em,level distance=8em]
              child{
                node{$5-c$}
                edge from parent
                node[above,sloped]{$E_1$}
              }
              child{
                node{$20-c$}
                edge from parent
                node[below,sloped]{$E_2$}
              }
              edge from parent
              node[below,sloped]{$d_2$}
            }
            edge from parent
            node[above,sloped]{$S_1$}
          }
          child{
            node[draw,rectangle]{$\decnode[1](1)[2]$}
            [sibling distance=5em,level distance=6em]
            child{
              node[draw,circle]{$\chancenode[1](1)[2](1)$}
              [sibling distance=2em,level distance=8em]
              child{
                node{$10-c$}
                edge from parent
                node[above,sloped]{$E_1$}
              }
              child{
                node{$15-c$}
                edge from parent
                node[below,sloped]{$E_2$}
              }
              edge from parent
              node[above,sloped]{$d_1$}
            }
            child{
              node[draw,circle]{$\chancenode[1](1)[2](2)$}
              [sibling distance=2em,level distance=8em]
              child{
                node{$5-c$}
                edge from parent
                node[above,sloped]{$E_1$}
              }
              child{
                node{$20-c$}
                edge from parent
                node[below,sloped]{$E_2$}
              }
              edge from parent
              node[below,sloped]{$d_2$}
            }
            edge from parent
            node[below,sloped]{$S_2$}
          }
          edge from parent
          node[above,sloped]{$d_S$}
        }
        child{
          node[draw,rectangle]{$\decnode[1][2]$}
          [sibling distance=5em,level distance=6em]
          child{
            node[draw,circle]{$\chancenode[1][2](1)$}
            [sibling distance=2em,level distance=8em]
            child{
              node{$10$}
              edge from parent
              node[above,sloped]{$E_1$}
            }
            child{
              node{$15$}
              edge from parent
              node[below,sloped]{$E_2$}
            }
            edge from parent
            node[above,sloped]{$d_1$}
          }
          child{
            node[draw,circle]{$\chancenode[1][2](2)$}
            [sibling distance=2em,level distance=8em]
            child{
              node{$5$}
              edge from parent
              node[above,sloped]{$E_1$}
            }
            child{
              node{$20$}
              edge from parent
              node[below,sloped]{$E_2$}
            }
            edge from parent
            node[below,sloped]{$d_2$}
          }
          edge from parent
          node[below,sloped]{$d_{\compl{S}}$}
        };
    \end{tikzpicture}
    \caption{A decision tree for walking in the lake district.}
  \label{fig:lake:tree:basic}
  \end{center}
\end{figure}
Decision nodes are depicted by squares, and chance nodes by
circles. From each node, a number of branches emerge, representing decisions at decision nodes and events at chance nodes. The events from a node form a partition of the possibility space: exactly one of the events will take place. 
Each path in a decision tree corresponds to a sequence
of decisions and events. The reward from each such
sequence appears at the right hand end of the branch.

\subsection{Notation}
\label{sec:dectrees:notation}

A particular decision tree can be seen as a combination of smaller decision trees: for example, one could draw the subtree corresponding to buying the newspaper, and also draw the subtree corresponding to making an immediate decision. The decision tree for the full problem is then formed by joining these two subtrees at a decision node.

So, we can represent a decision tree by its subtrees and the type of its root node. Let $\tree_1$, \dots, $\tree_n$ be decision trees. If $\tree$ combines the trees at a decision node, we write
\begin{equation*}
  \tree = \bigdecnodeunion_{i=1}^n \tree_i.
\end{equation*}
If $\tree$ combines the trees at a chance node, with subtree $\tree_i$ being connected by event $\event[i]$ ($\event[1]$, \dots, $\event[n]$ is a partition of the possibility space) we write
\begin{equation*}
  \tree = \bigchancenodemixture_{i=1}^n \event[i] \tree_i.
\end{equation*}
For instance, for the tree of Fig.~\ref{fig:lake:tree:basic} with $c=1$, we write
\begin{equation*}
  (S_1(T_1\decnodeunion T_2)\chancenodemixture S_2(T_1\decnodeunion T_2))\decnodeunion(U_1\decnodeunion U_2)
\end{equation*}
with, where we denoted the reward nodes by their utility,
\begin{align*}
  T_1&=E_1 9 \chancenodemixture E_2 14
  &
  U_1&=E_1 10 \chancenodemixture E_2 15
  \\
  T_2&=E_1 4 \chancenodemixture E_2 19
  &
  U_2&=E_1 5 \chancenodemixture E_2 20
\end{align*}
The above notation shall prove very useful when considering recursive definitions.

In this paper we often consider subtrees of larger trees. For subtrees, it is important to know the events that were observed in the past. Two subtrees with the same configuration of nodes and arcs may have different preceding events, and should be treated differently. Therefore we associate with every decision tree $\tree$ an event $\treeevent{\tree}$ representing the intersection of all the events on chance arcs that have preceded $\tree$. 

\begin{definition}\label{def:subtreeat}
 A subtree of a tree $\tree$ obtained by removal of all non-descendants of a particular node $N$ is called \emph{the subtree of $\tree$ at $N$} and is denoted by $\subtreeat{\tree}{N}$.
\end{definition}

These subtrees are called `continuation trees' by Hammond \cite{1988:hammond}.

Consider all possible ways that sets of decision trees $\setoftrees_1,$ \dots, $\setoftrees_n$ can be combined. Our notation easily extends. For any partition $E_1$, \dots, $E_n$,
\begin{equation*}
\bigchancenodemixture_{i=1}^n E_i \setoftrees_i = \Bigg\{\bigchancenodemixture_{i=1}^n E_i \tree_i\colon \tree_i \in \setoftrees_i\Bigg\}.
\end{equation*}
For any sets of consistent decision trees $\setoftrees_1$, \dots, $\setoftrees_n$,
\begin{equation*}
\bigdecnodeunion_{i=1}^n \setoftrees_i = \Bigg\{\bigdecnodeunion_{i=1}^n \tree_i\colon\tree_i\in\setoftrees_i\Bigg\}.
\end{equation*}

For convenience we only work with decision trees for which there is no event arc that is impossible given preceding events.
\begin{definition}
  A decision tree $\tree$ is called \emph{consistent} if for every node $N$ of $\tree$,
  $$\treeevent{\subtreeat{\tree}{N}}\neq\emptyset.$$
\end{definition}
Clearly, if a decision tree $\tree$ is consistent, then for any node $N$ in $\tree$, $\subtreeat{\tree}{N}$ is also consistent. Considering only consistent trees is not really a restriction, since inconsistent trees would only be drawn due to an oversight and could easily be made consistent.

\subsection{Solving Decision Trees with Probabilities and Utilities}
\label{sec:preciseexample}

We give a brief overview of the standard method of solving a decision tree when probabilities of events are known. Suppose in Fig.~\ref{fig:lake:tree:basic} we have $p(S_1)=0.6$, $p(E_1|S_1)=0.7$, and $p(E_1|S_2)=0.2$, so $p(E_1)=0.5$. We first calculate the expected utility of the final chance nodes. For example, the expected utility at $\chancenode[1](1)[1](1)$ is $0.7(10-c)+0.3(15-c)=11.5-c$, and the expected utility at $\chancenode[1](1)[1](2)$ is $9.5-c$.

We now see that at $\decnode[1](1)[1]$ it is better to choose decision $d_1$. We then \emph{replace} $\decnode[1](1)[1]$ and its subtree with the expected utility of $\decnode[1](1)[1](1)$: $11.5-c$. Also follow this procedure for $\decnode[1](1)[2]$ and $\decnode[1][2]$, and the tree has been reduced by a stage. We find that $d_2$ is optimal at $\decnode[1](1)[2]$ with value $17-c$, and at $\decnode[1][2]$ both decisions are optimal with value $12.5$.

Next, take expected utility at $\chancenode[1](1)$, which is $0.6(11.5-c)+0.4(17-c)=13.7-c$. At $\decnode[1]$, we therefore take decision $d_S$ if $c\le 1.2$ and $d_{\compl{S}}$ if $c\ge 1.2$. This procedure is illustrated in Fig.~\ref{fig:lake:tree:eu:solution}, where the dashed lines indicate decision arcs that are rejected because their expected utility is too low (for any specific $c\neq1.2$, either $d_S$ or $d_{\compl{S}}$ would be dashed).

\begin{figure}
  \begin{center}
    \begin{tikzpicture}
      [minimum size=2em,parent anchor=east,child anchor=west,grow'=east,scale=0.75,transform shape]
        \node[draw,rectangle]{$\decnode[1]$}
        [sibling distance=18em,level distance=6em]
        child{
          node[draw,circle]{$13.7-c$}
          [sibling distance=13em,level distance=6em]
          child{
            node[minimum height=3.5em,draw,rectangle]{$11.5-c$}
            [sibling distance=5em,level distance=8em]
            child{
              node[draw,circle]{$11.5-c$}
              [sibling distance=2em,level distance=8em]
              child{
                node{$10-c$}
                edge from parent
                node[above,sloped]{$0.7$}
              }
              child{
                node{$15-c$}
                edge from parent
                node[below,sloped]{$0.3$}
              }
              node{}
              edge from parent
              node[above,sloped]{$d_1$}
            }
            child{
              node[draw,circle]{$9.5-c$}
              [sibling distance=2em,level distance=8em]
              child{
                node{$5-c$}
                edge from parent[solid]
                node[above,sloped]{$0.7$}
              }
              child{
                node{$20-c$}
                edge from parent[solid]
                node[below,sloped]{$0.3$}
              }
              edge from parent[dashed]
            }
            edge from parent
            node[above,sloped]{$0.6$}
          }
          child{
            node[minimum height=3em,draw,rectangle]{$17-c$}
            [sibling distance=5em,level distance=8em]
            child{
              node[draw,circle]{$14-c$}
              [sibling distance=2em,level distance=8em]
              child{
                node{$10-c$}
                edge from parent[solid]
                node[above,sloped]{$0.2$}
              }
              child{
                node{$15-c$}
                edge from parent[solid]
                node[below,sloped]{$0.8$}
              }
              edge from parent[dashed]
            }
            child{
              node[draw,circle]{$17-c$}
              [sibling distance=2em,level distance=8em]
              child{
                node{$5-c$}
                edge from parent
                node[above,sloped]{$0.2$}
              }
              child{
                node{$20-c$}
                edge from parent
                node[below,sloped]{$0.8$}
              }
              edge from parent
              node[below,sloped]{$d_2$}
            }
            edge from parent
            node[below,sloped]{$0.4$}
          }
          edge from parent
          node[above,sloped]{$d_S$}
        }
        child{
          node[draw,rectangle]{$12.5$}
          [sibling distance=5em,level distance=6em]
          child{
            node[draw,circle]{$12.5$}
            [sibling distance=2em,level distance=8em]
            child{
              node{$10$}
              edge from parent
              node[above,sloped]{$0.5$}
            }
            child{
              node{$15$}
              edge from parent
              node[below,sloped]{$0.5$}
            }
            edge from parent
            node[above,sloped]{$d_1$}
          }
          child{
            node[draw,circle]{$12.5$}
            [sibling distance=2em,level distance=8em]
            child{
              node{$5$}
              edge from parent
              node[above,sloped]{$0.5$}
            }
            child{
              node{$20$}
              edge from parent
              node[below,sloped]{$0.5$}
            }
            edge from parent
            node[below,sloped]{$d_2$}
          }
          edge from parent
          node[below,sloped]{$d_{\compl{S}}$}
        };
    \end{tikzpicture}
    \caption{Solving the Lake District problem with expected utility.}
  \label{fig:lake:tree:eu:solution}
  \end{center}
\end{figure}
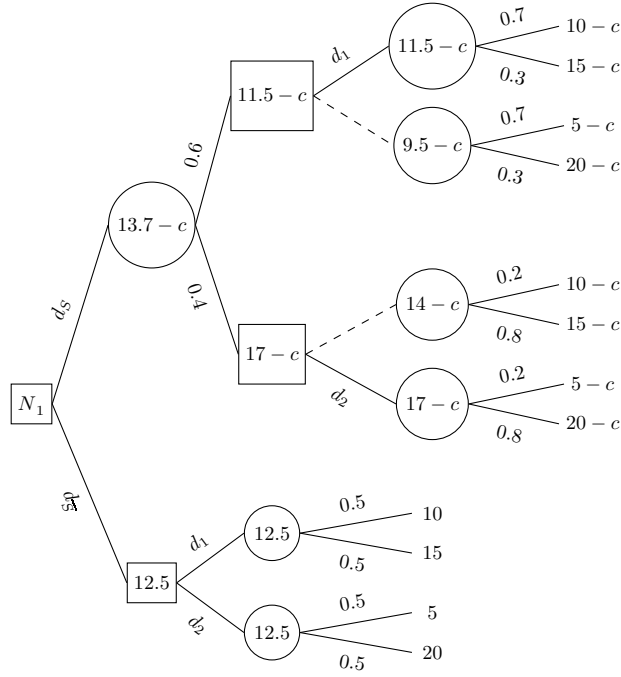

This method can only be carried out if the subject has assessed precise probabilities and utilities and wishes to maximize expected utility. It may be that the subject is unable or unwilling to comply with these requirements. The next section considers a possible solution, and demonstrates how backward induction can be generalized.

\section{Credal Sets and Lower Previsions}
\label{sec:ip:optimality}

First, we outline a straightforward generalization of the
theory of probability, allowing the subject to
model uncertainty in cases where too little information is available
to identify a unique probability distribution (see for instance \cite{1854:boole,1931:ramsey,1974:definetti,1961:smith,1975:williams:condprev,2007:williams:condprev,1991:walley}).

\subsection{Gambles, Credal Sets and Lower Previsions}

The \emph{possibility space} $\pspace$ is the set of all possible states of the world. Elements of
$\pspace$ are denoted by $\omega$.
Subsets of $\pspace$ are called \emph{events}, and are denoted by capital letters $A$, $B$, etc. The arcs emerging from chance nodes in a decision tree correspond to events.

A \emph{gamble} is a function $X\colon \pspace\to\mathbb{R}$, interpreted as an uncertain reward: should $\omega\in\pspace$ be the true state of the world, the gamble $X$ will yield the reward $X(\omega)$.

A \emph{probability mass function} is a non-negative real-valued
function $p\colon \pspace\to\mathbb{R}^+$ whose values sum to one \cite[p.~138,
Sec.~4.2]{2006:ross::probability}. For convenience, we will write
$p(A)$ for $\sum_{\omega\in A}p(\omega)$.

For the purpose of this paper, we assume that
\begin{itemize}
\item $\pspace$ is finite,
\item rewards are expressed in utiles,
\item the subject can express her beliefs by means of a closed convex
 set $\domlinprevs$ of probability mass functions $p$ ($\domlinprevs$ is called the \emph{credal set}), and
\item each probability mass function $p\in\domlinprevs$ satisfies $p(\omega)>0$, for all $\omega\in\pspace$.
\end{itemize}

Under the above assumptions, each $p$ in $\domlinprevs$ determines a conditional expectation
\begin{equation*}
  E_p(X|A)=\frac{\sum_{\omega\in A}X(\omega)p(\omega)}{p(A)},
\end{equation*}
and the whole set $\domlinprevs$ determines a conditional lower and upper expectation
\begin{align*}
  \lpr(X|A)&=\min_{p\in\domlinprevs}E_p(X|A)
  &
  \upr(X|A)&=\max_{p\in\domlinprevs}E_p(X|A),
\end{align*}
and this for every gamble $X$ and every non-empty event $A$.

The functional $\lpr$ is called a \emph{coherent conditional lower
  prevision}, and similarly, $\upr$ is called a \emph{coherent
  conditional upper prevision}. Although here we have defined these
by means of a set of probability measures, there are
different ways of obtaining and interpreting lower and upper
previsions (see for instance Miranda
\cite{2008:miranda::survey:lowprevs} for a survey).

Following De Finetti \cite{1974:definetti}, where it is convenient we denote the indicator
gamble
\begin{equation*}
  I_A(\omega)=
  \begin{cases}
    1&\text{if }\omega\in A\\
    0&\text{if }\omega\not\in A
  \end{cases}
\end{equation*}
of an event $A$ also simply by $A$: for instance, if $X$ is a gamble, and $A$ is an event, then $AX$ is just a shorthand notation for $I_{A}X$.

Below are some properties of coherent conditional lower and upper
previsions that we require later (see Williams \cite{1975:williams:condprev,2007:williams:condprev} or Walley \cite{1991:walley} for proofs).

\begin{proposition}\label{prop:lpr}
  For all non-empty events $A,B$, gambles $X,Y$, and constants $\lambda>0$:
  \begin{enumerate}[label=(\roman*)]
  \item\label{prop:lpr:ax-is-ay} If $AX=AY$ then $\lpr(X|A)=\lpr(Y|A)$.
  \item\label{prop:lpr:addition} $\lpr(X|A) + \lpr(Y|A) \leq \lpr(X+Y|A) \leq \lpr(X|A) + \upr(Y|A)$.
  \item\label{prop:lpr:multiplication} $\lpr(\lambda X|A) = \lambda \lpr(X|A) $ and $ \upr(\lambda X|A) = \lambda\upr(X|A)$.
  \item\label{prop:lpr:upr:conjugacy} $\lpr(X|A) = -\upr(-X|A)$.
  \item\label{prop:lpr:gbr} $\lpr(A(X-\lpr(X|A\cap B))|B)=0$.
  \end{enumerate}
\end{proposition}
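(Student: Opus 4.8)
The plan is to reduce every part to the defining representation $\lpr(X|A)=\min_{p\in\domlinprevs}E_p(X|A)$ and $\upr(X|A)=\max_{p\in\domlinprevs}E_p(X|A)$, exploiting three elementary facts: that the credal set $\domlinprevs$ is compact (being a closed, bounded set of mass functions on the finite space $\pspace$), so that these extrema are attained; that for each fixed $p$ the functional $X\mapsto E_p(X|A)$ is linear and positively homogeneous; and that $E_p(X|A)$ depends on $X$ only through the values $X(\omega)$ with $\omega\in A$, that is, only through $AX=I_AX$. With these observations the first, third, and fourth parts are essentially immediate.

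Indeed, for part~\ref{prop:lpr:ax-is-ay}, if $AX=AY$ then the numerators $\sum_{\omega\in A}X(\omega)p(\omega)$ and $\sum_{\omega\in A}Y(\omega)p(\omega)$ coincide for every $p$, so $E_p(X|A)=E_p(Y|A)$ pointwise in $p$ and the minima agree. For part~\ref{prop:lpr:multiplication}, since $E_p(\lambda X|A)=\lambda E_p(X|A)$ and $\lambda>0$, the scalar factors out of both the $\min$ and the $\max$. For part~\ref{prop:lpr:upr:conjugacy}, $\upr(-X|A)=\max_p\bigl(-E_p(X|A)\bigr)=-\min_pE_p(X|A)=-\lpr(X|A)$.

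For part~\ref{prop:lpr:addition} I would first prove the left inequality by superadditivity of the minimum: from the additivity $E_p(X+Y|A)=E_p(X|A)+E_p(Y|A)$ and the general bound $\min_p(f(p)+g(p))\ge\min_pf(p)+\min_pg(p)$ one obtains $\lpr(X+Y|A)\ge\lpr(X|A)+\lpr(Y|A)$. The right inequality then follows formally rather than by a new argument: apply this left inequality to the gambles $X+Y$ and $-Y$ to get $\lpr(X|A)\ge\lpr(X+Y|A)+\lpr(-Y|A)$, rewrite $\lpr(-Y|A)=-\upr(Y|A)$ via part~\ref{prop:lpr:upr:conjugacy}, and rearrange to $\lpr(X+Y|A)\le\lpr(X|A)+\upr(Y|A)$.

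The main obstacle is part~\ref{prop:lpr:gbr}, the generalized Bayes rule, and here the strict positivity assumption does the real work. Writing $\mu=\lpr(X|A\cap B)$, which is well defined since $A\cap B\neq\emptyset$, the key computation is to expand, for each $p\in\domlinprevs$,
\[
  E_p\bigl(A(X-\mu)\,\big|\,B\bigr)
  =\frac{\sum_{\omega\in A\cap B}(X(\omega)-\mu)p(\omega)}{p(B)}
  =\frac{p(A\cap B)}{p(B)}\bigl(E_p(X|A\cap B)-\mu\bigr).
\]
Because every $p$ is strictly positive and $A\cap B\neq\emptyset$, the coefficient $p(A\cap B)/p(B)$ is strictly positive, while $E_p(X|A\cap B)-\mu\ge0$ by the definition of $\mu$ as a minimum; hence the whole expression is nonnegative for every $p$, giving $\lpr(A(X-\mu)|B)\ge0$. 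For the reverse, I would choose $p^\ast\in\domlinprevs$ attaining $\mu=E_{p^\ast}(X|A\cap B)$ (possible by compactness), so that the bracket vanishes and $E_{p^\ast}(A(X-\mu)|B)=0$; thus the minimum over $\domlinprevs$ is exactly $0$. The only delicate points are ensuring $A\cap B\neq\emptyset$ so that $\mu$ exists, and using strict positivity both to keep the coefficient from vanishing and to transmit the sign of the bracket to the conditional expectation.
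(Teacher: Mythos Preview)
Your argument is correct. Each step reduces cleanly to the credal-set representation, and your treatment of part~\ref{prop:lpr:gbr} is the heart of the matter: the factorisation
\[
  E_p\bigl(A(X-\mu)\mid B\bigr)=\frac{p(A\cap B)}{p(B)}\bigl(E_p(X\mid A\cap B)-\mu\bigr)
\]
together with compactness of $\domlinprevs$ (so the minimiser $p^\ast$ exists) and strict positivity (so the coefficient never vanishes and all conditionals are defined) yields both inequalities. You are also right to flag that part~\ref{prop:lpr:gbr} tacitly requires $A\cap B\neq\emptyset$, even though the proposition only hypothesises $A$ and $B$ non-empty; without that, $\lpr(X\mid A\cap B)$ is not defined.

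As for comparison with the paper: the paper does not actually prove this proposition. It simply states the properties and refers the reader to Williams and Walley for proofs. Those references treat coherent lower previsions in far greater generality (infinite possibility spaces, previsions not necessarily arising as lower envelopes, coherence developed axiomatically), so their proofs of the generalized Bayes rule are correspondingly more involved. Your route is genuinely more elementary because it exploits precisely the simplifying assumptions the paper imposes---finite $\pspace$, a closed convex $\domlinprevs$, and strictly positive mass functions---to reduce everything to finite-dimensional compactness and continuity of rational functions. What you lose is portability: your argument does not extend beyond the lower-envelope setting, whereas the cited proofs do. For the purposes of this paper, however, your approach is entirely adequate and arguably preferable for being self-contained.
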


Property~\ref{prop:lpr:gbr} generalizes the \emph{generalized Bayes
  rule}, which describes how conditional lower previsions are linked
to unconditional lower previsions. For instance, if $\lpr(A|B)>0$,
then $\lpr(\cdot|A\cap B)$ is uniquely
determined by $\lpr(\cdot|B)$ via property~\ref{prop:lpr:gbr}
\cite[p.~297, Thm.~6.4.1]{1991:walley}.

\subsection{Choice Functions and Optimality}
\label{sec:choice:functions}

Suppose a subject must choose a gamble from a set $\mathcal{X}$. In classical decision theory, a gamble is optimal if its expectation is maximal. More generally, given a credal set $\domlinprevs$, the subject might consider as optimal for example any gambles whose expectation is maximal for at least one $p\in\domlinprevs$: in fact, most criteria determine optimal decisions by comparison of gambles. So, we suppose that the subject has a way of determining an optimal subset of any set of gambles, conditional on any event $A$ (think of $A$ as $\treeevent{\tree}$):

\begin{definition}\label{def:choice:function}
  A \emph{choice function} $\opt$ is an operator that, for each non-empty event $A$, maps each non-empty finite set $\mathcal{X}$ of gambles to a non-empty subset of this set:
  \begin{equation*}
    \emptyset \neq \opt(\mathcal{X}|A) \subseteq \mathcal{X}.
  \end{equation*}
\end{definition}

Note that common uses of choice functions in social choice theory, such as by Sen~\cite[p.~63, ll.~19--21]{1977:sen}, do not consider conditioning on events, and define choice functions for arbitrary sets of options, rather than for gambles only.

The interpretation of a choice function is that when the subject can choose among the elements of $\mathcal{X}$, having observed $A$, she would only choose from $\opt(\mathcal{X}|A)$. Therefore, we say the elements of $\opt(\mathcal{X}|A)$ are \emph{optimal} (relative to $\mathcal{X}$ and $A$). Note that the subject may not consider them equivalent: adding a small incentive to choose a particular optimal option would not necessarily make it the single preferred option.

We now consider four popular choice functions that have been proposed for choosing between gambles given a coherent lower prevision. Further discussion of the criteria presented here can be found in Troffaes \cite{2007:troffaes}.

\subsubsection{Maximality}

Maximality is based on the following strict partial preference order $\lprpref{A}$.
\begin{definition}\label{def:lpr:partial:order}
  Given a coherent lower prevision $\lpr$, for any two gambles $X$ and $Y$ we write $X \lprpref{A} Y$ whenever $\lpr(X - Y|A)>0$.
\end{definition}

The partial order $\lprpref{A}$ gives rise to the choice function \emph{maximality}, proposed by Condorcet \cite[pp.~lvj--lxix, 4.$^{e}$ Exemple]{1785:condorcet}, Sen \cite{1977:sen}, and Walley \cite{1991:walley}, among others.\footnote{Because all probabilities in $\mathcal{M}$ are assumed to be strictly positive, Walley's admissibility condition is implied and hence omitted in Definition~\ref{def:maximality}.}
\begin{definition}
  \label{def:maximality}
  For any non-empty finite set of gambles $\mathcal{X}$ and each event $A\neq\emptyset$,
  \begin{equation*}
    \optmax(\mathcal{X}|A)=\{X\in\mathcal{X}\colon(\forall Y \in \mathcal{X})(Y\notlprpref{A} X)\}.
  \end{equation*}
\end{definition}

\subsubsection{E-admissibility}

Another criterion is E-admissibility, proposed by Levi \cite{1980:levi}. Recall that $\lpr(\cdot | A)$ is the lower envelope of $\domlinprevs$. For each $p\in\domlinprevs$ we can maximize expected utility:
\begin{equation*}
  \opt_p(\mathcal{X}|A)=\{X\in\mathcal{X}\colon(\forall Y\in\mathcal{X})(E_p(Y|A)\leq E_p(X|A))\}.
\end{equation*}
Then the set of E-admissible options is defined by:
\begin{definition}
  For any non-empty finite set of gambles $\mathcal{X}$ and each event $A\neq\emptyset$,
  \begin{equation*}
    \optE(\mathcal{X}|A) = \bigcup_{p\in\domlinprevs}\opt_p(\mathcal{X}|A).
  \end{equation*}
\end{definition}

A gamble $X$ is therefore E-admissible when it maximizes expected utility under at least one $p\in\domlinprevs$. Any E-admissible gamble is maximal~\cite[p.~162, ll.~26--28]{1991:walley}.

\subsubsection{Interval Dominance}

Interval dominance is based on the strict partial preference order $\intervalpref{A}$.
\begin{definition}
Given a coherent lower prevision $\lpr$, for any non-empty event $A$ and any two $A$-consistent gambles $X$ and $Y$ we write $X \intervalpref{A} Y$ whenever $\lpr(X|A) > \upr(Y|A)$.
\end{definition}
This ordering induces a choice function usually called \emph{interval dominance} \cite{2003:zaffalon::dementia,2007:troffaes}:
\begin{definition}
  For any non-empty finite set of gambles $\mathcal{X}$ and each event $A\neq\emptyset$,
\begin{equation*}
 \optint(\mathcal{X}|A) = \{X\in\mathcal{X}\colon(\forall Y \in \mathcal{X}) (Y \notintervalpref{A} X)\}.
\end{equation*}
\end{definition}

The above criterion was apparently first introduced by Kyburg \cite{1983:kyburg} and was originally called stochastic dominance. 
\subsubsection{$\Gamma$-maximin}

$\Gamma$-maximin selects gambles that maximizes the minimum expected reward.
\begin{definition}
  For any non-empty finite set of gambles $\mathcal{X}$ and each event $A\neq\emptyset$,
  \begin{equation*}
  \optmaximin(\mathcal{X}|A)=\{X\in\mathcal{X}\colon(\forall Y\in\mathcal{X})(\lpr(X|A)\geq\lpr(Y|A))\}.
  \end{equation*}
\end{definition}

$\Gamma$-maximin is a total preorder, and so usually selects a single gamble regardless of the degree of uncertainty in $\lpr$. $\Gamma$-maximin can be criticized for being too conservative (see Walley \cite[p.~164]{1991:walley}), as it only takes into account the worst possible scenario.

\subsection{Sequential Problems Using Lower Previsions}
\label{sec:example:one}

Consider again the lake district problem depicted in Fig.~\ref{fig:lake:tree:basic}, but now suppose that the subject has specified a coherent lower prevision, instead of a singe probability measure. For this example, we consider an $\epsilon$-contamination model: with probability $1-\epsilon$, observations follow a given probability mass function $p$, and with probability $\epsilon$, observations follow an unknown arbitrary distribution. One can easily check that, under this model, the lower expectation for a gamble $X$ is
\begin{equation*}
 \lpr(X) = (1-\epsilon) E_p(X) + \epsilon \inf X
\end{equation*}
The conditional lower expectation is \cite[p. 309]{1991:walley}
\begin{equation*}
 \lpr(X | A) = \frac{(1-\epsilon)E_p(AX) + \epsilon\inf_{\omega \in A} X(\omega)}{(1-\epsilon)p(A) + \epsilon}
\end{equation*}
As before, let $p(S_1)=0.6$, $p(E_1|S_1)=0.7$, and $p(E_1|S_2)=0.2$, so $p(E_1)=0.5$. Let $\epsilon = 0.1$.

Naively, she could solve the problem with the usual \emph{normal form method}: she lists all possible strategies (actions to take in all eventualities), finds the corresponding gambles, and applies a suitable choice function, say maximality.

Table~\ref{tab:lake:gambles} lists all strategies and their gambles. Each strategy gives a reward determined entirely by $\omega$, and hence has a corresponding gamble. For example, the gamble for the last strategy is
$$(5-c)S_1E_1+(20-c)S_1E_2 + (10-c)S_2E_1+(15-c)S_2E_2=S_1Y+S_2X-c,$$
with $X=10E_1+15E_2$ and $Y=5E_1+20E_2$.
Recall that $(5-c)S_1E_1$ is just a shorthand notation for $(5-c)I_{S_1}I_{E_1}$, and similarly for all other terms.

\begin{table}
  \begin{center}
    \begin{tabular}{l|l}
      strategy & gamble \\
      \hline
      $d_{\compl{S}}$, then $d_1$ & $X$ \\
      $d_{\compl{S}}$, then $d_2$ & $Y$ \\
      $d_S$, then $d_1$ if $S_1$ and $d_1$ if $S_2$ & $X-c$ \\
      $d_S$, then $d_2$ if $S_1$ and $d_2$ if $S_2$ & $Y-c$ \\
      $d_S$, then $d_1$ if $S_1$ and $d_2$ if $S_2$ & $S_1X+S_2Y-c$ \\
      $d_S$, then $d_2$ if $S_1$ and $d_1$ if $S_2$ & $S_1Y+S_2X-c$
    \end{tabular}
    \caption{Strategies and gambles for the lake district problem.}
   \label{tab:lake:gambles}
  \end{center}
\end{table}

Maximality can then be applied to find the optimal gambles: this requires comparison of all six gambles at once. Skipping the details of this calculation, for instance with $c=0.5$, we find that we should buy the newspaper and follow its advice.

However, could we think of a backward induction scheme which might not require comparison of all six gambles at once? Obviously, any such scheme will not work as easily as in Section~\ref{sec:preciseexample}, because we are not maximizing for a single number (i.e. expected utility). Instead, \emph{we retain the optimal strategies in subtrees}, as illustrated in Fig.~\ref{fig:lake:tree:solution}.

\begin{figure}
  \begin{center}
    \begin{minipage}{0.5\textwidth}
    \begin{enumerate}[label=(\roman*)]
    \item
    \begin{tikzpicture}
      [baseline=0,minimum size=2em,parent anchor=east,child anchor=west,grow'=east,scale=0.75,transform shape]
        \node[draw,rectangle]{$\decnode[1]$}
        [sibling distance=6em,level distance=6em]
        child{
          node[draw,circle]{$\chancenode[1](1)$}
          [sibling distance=5em,level distance=6em]
          child{
            node[draw,rectangle]{$\decnode[1](1)[1]$}
            [sibling distance=2em,level distance=6em]
            child{
              node{$\{X-c\}$}
              edge from parent
              node[above,sloped]{$d_1$}
            }
            child{
              node{$\{Y-c\}$}
              edge from parent
              node[below,sloped]{$d_2$}
            }
            edge from parent
            node[above,sloped]{$S_1$}
          }
          child{
            node[draw,rectangle]{$\decnode[1](1)[2]$}
            [sibling distance=2em,level distance=6em]
            child{
              node{$\{X-c\}$}
              edge from parent
              node[above,sloped]{$d_1$}
            }
            child{
              node{$\{Y-c\}$}
              edge from parent
              node[below,sloped]{$d_2$}
            }
            edge from parent
            node[below,sloped]{$S_2$}
          }
          edge from parent
          node[above,sloped]{$d_S$}
        }
        child{
          node[draw,rectangle]{$\decnode[1][2]$}
          [sibling distance=2em,level distance=6em]
          child{
            node{$\{X\}$}
            edge from parent
            node[above,sloped]{$d_1$}
          }
          child{
            node{$\{Y\}$}
            edge from parent
            node[below,sloped]{$d_2$}
          }
          edge from parent
          node[below,sloped]{$d_{\compl{S}}$}
        };
    \end{tikzpicture}
    \item
    \begin{tikzpicture}
      [baseline=0,minimum size=2em,parent anchor=east,child anchor=west,grow'=east,scale=0.75,transform shape]
        \node[draw,rectangle]{$\decnode[1]$}
        [sibling distance=3em,level distance=6em]
        child{
          node[draw,circle]{$\chancenode[1](1)$}
          [sibling distance=2em,level distance=6em]
          child{
            node[right]{$\opt(\{X-c,Y-c\}|S_1)=\{X-c\}$}
            edge from parent
            node[above,sloped]{$S_1$}
          }
          child{
            node[right]{$\opt(\{X-c,Y-c\}|S_2)=\{Y-c\}$}
            edge from parent
            node[below,sloped]{$S_2$}
          }
          edge from parent
          node[above,sloped]{$d_S$}
        }
        child{
          node[right]{$\opt(\{X,Y\})=\{X,Y\}$}
          edge from parent
          node[below,sloped]{$d_{\compl{S}}$}
        };
    \end{tikzpicture}
    \item
    \begin{tikzpicture}
      [baseline=0,minimum size=2em,parent anchor=east,child anchor=west,grow'=east,scale=0.75,transform shape]
        \node[draw,rectangle]{$\decnode[1]$}
        [sibling distance=2em,level distance=6em]
        child{
          node[right]{$\opt(\{S_1(X-c)+S_2(Y-c)\})=\{S_1X+S_2Y-c\}$}
          edge from parent
          node[above,sloped]{$d_S$}
        }
        child{
          node[right]{$\{X,Y\}$}
          edge from parent
          node[below,sloped]{$d_{\compl{S}}$}
        };
    \end{tikzpicture}
    \item \label{fig:lake:tree:solution:last:step}
    \begin{tikzpicture}
      [baseline=0,minimum size=2em,parent anchor=east,child anchor=west,grow'=east,scale=0.75,transform shape]
      \node{$\opt(\{S_1X+S_2Y-c,X,Y\})=\begin{cases}\{S_1X+S_2Y-c\}&\text{if }c<29/50\\ \{X,Y\} &\text{if }c>79/50 \\ \{S_1X+S_2Y-c,X,Y\} & \text{otherwise}\end{cases}$};
    \end{tikzpicture}
    \end{enumerate}
    \end{minipage}
    \caption{Solving the lake district example by normal form backward induction.}
  \label{fig:lake:tree:solution}
  \end{center}
\end{figure}
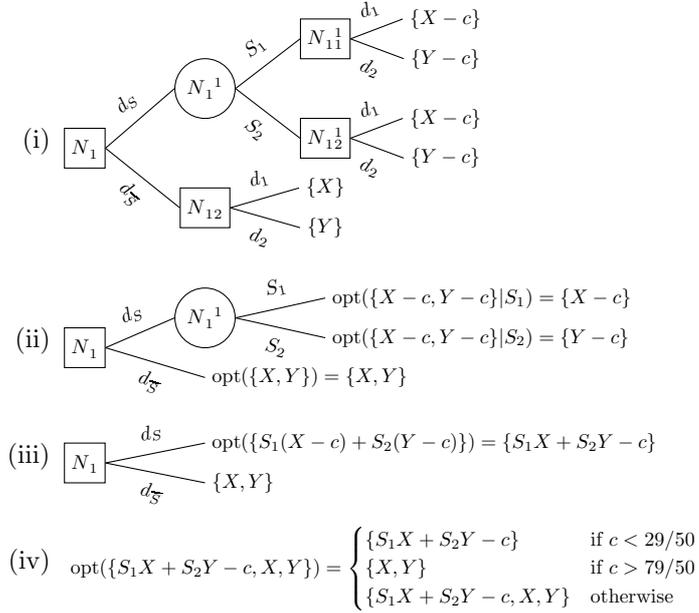

Denote subtrees at a particular node $N_*^*$ by
  $\subtree(*)[*]=\subtreeat{\tree}{N_{*}^*}$.

\begin{enumerate}[label=(\roman*)]
\item First, write down the gambles at the final chance nodes. For example, at $\chancenode[1](1)[1](1)$ the gamble is $(10-c)E_1 + (15-c)E_2=X-c$, and similarly for all others.

\item Let us first deal with the branch corresponding to refusing the newspaper. At the decision node $\decnode[1][2]$, we have a choice between two strategies that correspond to the gambles $X$ and $Y$. We also have $\treeevent{\subtree[1][2]}=\pspace$. So to determine the optimal strategies in this subtree, we must compare these two gambles unconditionally:
\begin{equation*}
 \lpr(X-Y) =  \lpr(Y-X) = -5 \epsilon=-1/2,
\end{equation*}
so at $\subtree[1][2]$ the strategies $d_1$ and $d_2$ are both optimal.

Now we move to the branch corresponding to buying the newspaper. At $\decnode[1](1)[1]$, we need to compare $X-c$ and $Y-c$. We have that $\treeevent{\subtree[1](1)[1]} = S_1$, and
\begin{equation*}
 \lpr((X-c) - (Y-c)| S_1) =
 \tfrac{6 - 31 \epsilon}{3+2\epsilon}>0,
\end{equation*}
so $X-c\lprpref{S_1}Y-c$ and the uniquely optimal strategy is $d_1$. Next, considering $\decnode[1](1)[2]$, we see that $\treeevent{\subtree[1](1)[2]} = S_2$, and
\begin{equation*}
   \lpr((Y-c) - (X-c) | S_2) =
 \tfrac{6 - 31 \epsilon}{2+3\epsilon}>0,
\end{equation*}
so the optimal strategy here is $d_2$.

\item Moving to $\chancenode[1](1)$, we see that only one of the original four strategies remains: ``$d_1$ if $S_1$ and $d_2$ if $S_2$'', corresponding to the gamble $S_1X+S_2Y-c$.
\item Finally, considering the entire tree $\tree$, three strategies are left: ``$d_1$ if $S_1$ and $d_2$ if $S_2$''; ``$d_{\compl{S}}$, then $d_1$''; ``$d_{\compl{S}}$, then $d_2$''. Therefore we need to find 
\begin{equation*}
\optmax(\{S_1X+S_2Y-c,X,Y\}).
\end{equation*}
 We have
 \begin{align*}
 \lpr\left(X - (S_1X+S_2Y-c)\right) &= c-(6+19\epsilon)/5=c-79/50 \\
\lpr\left((S_1X+S_2Y-c) - X\right) &= (6-31\epsilon)/5-c=29/50-c \\
 \lpr\left(Y - (S_1X+S_2Y-c)\right) &= c-(6+19\epsilon)/5=c-79/50 \\
\lpr\left((S_1X+S_2Y-c) - Y\right) &= (6-31\epsilon)/5-c=29/50-c
\end{align*}

Concluding (see Fig.~\ref{fig:lake:tree:solution}\ref{fig:lake:tree:solution:last:step}): 
\begin{itemize}
\item if the newspaper costs less than $29/50$, we should buy and follow its advice. 
\item if it costs more than $79/50$, we do not buy, but have insufficient information to decide whether to take the waterproof or not. 
\item if the newspaper costs between $29/50$ and $79/50$, we can take any of the three remaining options.
\end{itemize}
\end{enumerate}

Comparing this with the solution calculated in Section~\ref{sec:preciseexample}, we observe that the imprecision has created a range of $c$ for which it is unclear whether buying the newspaper is better than not, rather than the single value for $c$ in the precise case. Despite this, should the subject decide to buy the newspaper, she will follow the same policy in both cases: take the waterproof only if the newspaper predicts rain. Finally it should be noted that, although in both cases both $d_{\compl{S}}d_1$ and $d_{\compl{S}}d_2$ are involved in optimal normal form decisions for some values of $c$, in the precise case this is because they are \emph{equivalent} and in the imprecise case they are \emph{incomparable}. A tiny increase in value for, say, not taking the waterproof and no rain, would make $d_{\compl{S}}d_1$ always non-optimal under $E_p$ but still optimal under $\lpr$ for $c\geq29/50$.

For this particular example, it is easy though tedious to check that backward induction gives the same answer as the usual normal form method (that is, applying maximality to all six gambles), for any value of $c$. It is easy to find choice functions and decision trees where this does not work~\cite{1986:lavalle,1999:jaffray,2004:seidenfeld}.
We want to know for which of the coherent lower previsions choice functions the two methods agree. To answer this, we invoke a theorem relating to general choice functions, outlined in the next section. 

\section{Normal Form Solutions for Decision Trees}
\label{sec:solution}

We now introduce the necessary terminology for examining our two methods of solution in detail, and provide theorems stating when they coincide. These two methods yield \emph{normal form solutions}, that is, sets of optimal strategies at the root node. 

\subsection{Normal Form Decisions, Solutions, Operators, and Gambles}
\label{sec:forms}

Suppose the subject specifies a decision for each eventuality, and resolves to follow this policy. She now has no choice at decision nodes, and her reward is entirely determined by the state of nature. This corresponds to a reduced decision tree obtained by taking the initial tree and removing all but one of the arcs at each decision node. Such a reduced tree is called a \emph{normal form decision}, and represents what we called a ``strategy'' in Section~\ref{sec:example:one}. We denote the set of all normal form decisions of $\tree$ by $\nfd(\tree)$.

It is unlikely that the subject can specify a single optimal normal form decision for all problems. Nevertheless, she might be able to eliminate some unacceptable ones: a \emph{normal form solution} of a decision tree $\tree$ is simply a non-empty subset of $\nfd(\tree)$. 
A \emph{normal form operator} is then a function mapping every decision tree to a normal form solution of that tree. The two methods we investigate are normal form operators.

As we saw in Section~\ref{sec:example:one}, the reward for a normal form decision is determined entirely by the events that take place. That is, a normal form decision has a corresponding gamble, which we call a \emph{normal form gamble}. The set of all normal form gambles associated with a decision tree $\tree$ is denoted by $\normgambles(\tree)$, so $\normgambles$ is an operator on trees that yields the set of all gambles induced by normal form decisions of the tree.

We will need to know when a set of gambles can be represented by a consistent decision tree (as defined earlier in Section~\ref{sec:dectrees:notation}): 
\begin{definition}\label{def:gambles:consistent}
  Let $A$ be any non-empty event, and let $\mathcal{X}$ be a set of gambles. Then the following conditions are equivalent; if they are satisfied, we say that $\mathcal{X}$ is \emph{$A$-consistent}.
  \begin{enumerate}[label=(\Alph*)]
  \item\label{def:gambles:consistent:via:trees} There is a consistent decision tree $T$ with $\treeevent{\tree}=A$ and $\normgambles(\tree)=\mathcal{X}$.
  \item\label{def:gambles:consistent:via:inverse:map} For all $r\in\mathbb{R}$ and all $X\in\mathcal{X}$ such that $X^{-1}(r)\neq\emptyset$, it holds that $X^{-1}(r)\cap A\neq\emptyset$.
  \end{enumerate}
\end{definition}
Proof of equivalence of \ref{def:gambles:consistent:via:trees} and
\ref{def:gambles:consistent:via:inverse:map} is fairly
straightforward, whence omitted here.

The following notation proves convenient for normal form gambles at chance nodes.
\begin{definition}\label{def:gambsum:for:sets}
 For any events $\event_1$, \dots, $\event_n$ which form a partition, and any finite family of sets of gambles $\mathcal{X}_1$, \dots, $\mathcal{X}_n$, we define the following set of gambles:
\begin{equation}
 \label{eq:def:setsum}
 \sum_{i=1}^n \event_i\mathcal{X}_i
 =
 \left\{\sum_{i=1}^n \event_i X_i\colon X_i\in\mathcal{X}_i\right\}
\end{equation}
\end{definition}

\subsection{Normal Form Operator Induced by a Choice Function}

We can now formalize the simple normal form method described at the start of Section~\ref{sec:example:one}. Listing all strategies corresponds to finding $\nfd(\tree)$. Listing their corresponding gambles corresponds to finding $\normgambles(\tree)$. Then calculate $\opt(\normgambles(\tree)|\treeevent{\tree})$, and find all elements of $\nfd(\tree)$ that induced these optimal gambles. The solution is then the set of all these normal form decisions. Formally,

\begin{definition}\label{def:normoper}
  Given any choice function $\opt$, and any decision tree $\tree$ with $\treeevent{\tree}\neq\emptyset$,
\begin{equation*}
  \normoper_{\opt}(\tree)=\{\atree \in \nfd(\tree)\colon \normgambles(\atree)\subseteq \opt(\normgambles(\tree)|\treeevent{\tree})\}.
\end{equation*}
\end{definition}
The following important equality follows immediately:
\begin{equation}\label{eq:gambofnormoptisopt}
  \normgambles(\normoper_{\opt}(\tree))=\opt(\normgambles(\tree)|\treeevent{\tree}).
\end{equation}

Let us demonstrate this definition on lake district problem. For any
particular strategy $U$, say for instance ``buy the newspaper and take
the waterproof only if the newspaper predicts rain'', we can calculate its associated gamble $U$, which is in our instance
$$
X=(10-c)E_1S_1 \gambplus (15-c)E_2S_1\gambplus (5-c)E_1S_2\gambplus(20-c)E_2S_2.$$
We check
whether $X$ is optimal in the set of all gambles associated with
$\tree$, that is, whether
$X\in\opt(\normgambles(\tree)|\treeevent{\tree})$. If so, then
$\normgambles(U)=\{X\}\subseteq\opt(\normgambles(\tree)|\treeevent{\tree})$
and so $U\in\normoper_{\opt}(\tree)$. Otherwise,
$\normgambles(U)=\{X\}\not\subseteq\opt(\normgambles(\tree)|\treeevent{\tree})$
and so $U\not\in\normoper_{\opt}(\tree)$. This procedure for each
strategy in $\tree$ will determine $\normoper_{\opt}(\tree)$.

\subsection{Normal Form Backward Induction}

The operator $\normoper_{\opt}$ is a natural and popular choice, but for practical or philosophical reasons one may wish to be able to find it by backward induction. Even for an almost trivial problem such as Fig.~\ref{fig:lake:tree:basic} there are already six normal form decisions. If a tree $\tree$ has at least $n$ decision nodes in every path from the root to any leaf, and each decision node has at least two children, then there will be at least $2^n$ normal form decisions associated with $\tree$ (and often a lot more). Working with sets of $2^n$ gambles may be impractical for large $n$, particularly for maximality and E-admissibility, so a method that may avoid applying the choice function on a large set is necessary.

Implementation of backward induction is easy when there is a unique choice at every node, but a choice function may not have this property, so we need to adapt the traditional approach. The technique informally introduced in Section~\ref{sec:example:one} is a generalization of the method of Kikuti et al.~\cite{2005:kikuti}, where the only difference is that we apply our choice function at all nodes, not just decision nodes. Although the focus of Kikuti et al. is also on uncertainty models represented by coherent lower previsions, their approach, and so our generalization, can be used for any choice function on gambles.

The goal of our backward induction algorithm is to reach a normal form solution of $\tree$ by finding normal form solutions of subtrees of $\tree$, and using these to remove some elements of $\nfd(\tree)$ before applying $\opt$. A formal definition requires many definitions that hinder clarity, so in this paper we prefer a more intuitive informal approach. Rigorous treatment of our backward induction operator can be found in \cite{2009:huntley:troffaes::isipta,2011:huntley:subtree:perfectness}.

The algorithm moves from right to left in the tree as follows. At a subtree $\subtreeat{\tree}{N}$, find the set of normal form decisions $\nfd(\tree)$, but remove any strategies that contain substrategies judged non-optimal at any descendent node. For example, in Fig.~\ref{fig:lake:tree:basic}, at $\chancenode[1](1)$ there are four normal form decisions, but in the example the substrategy $d_2$ was removed at $\decnode[1](1)[1]$, and $d_1$ was removed at $\decnode[1](1)[2]$, so the only strategy at $\chancenode[1](1)$ that is retained is $d_1$ if $S_1$, $d_2$ if $S_2$. Next, find the corresponding gambles of all surviving normal form decisions, apply $\opt$, and transform back to optimal normal form decisions. Move to the next layer of nodes, and continue until the root node is reached. This yields a set of normal form decisions at the root node, that is, the algorithm corresponds to a normal form operator, which we call $\backopt$.

A further example using maximality, but this time using the notation of this section, can be found in Section~\ref{sec:example:two}, if further clarification is required.

In~\cite{2009:huntley:troffaes::isipta,2011:huntley:subtree:perfectness}, we found four necessary and sufficient properties on $\opt$ for $\backopt$ and $\normoper_{\opt}$ to coincide for any consistent decision tree.

\begin{property}[Backward conditioning property]\label{prop:backward:conditioning:property}
  Let $A$ and $B$ be events such that $A\cap B\neq\emptyset$ and $\compl{A}\cap B \neq \emptyset$, and let $\mathcal{X}$ be a non-empty finite $A\cap B$-consistent set of gambles, with $\{X,Y\}\subseteq \mathcal{X}$ such that $AX=AY$. Then $X\in\opt(\mathcal{X}|A\cap B)$ implies $Y\in\opt(\mathcal{X}|A\cap B)$ whenever there is a non-empty finite $\compl{A}\cap B$-consistent set of gambles $\mathcal{Z}$ such that, for at least one $Z\in\mathcal{Z}$,
  \begin{equation*}
    AX + \compl{A}Z \in\opt(A\mathcal{X} + \compl{A}\mathcal{Z} | B).
  \end{equation*}
\end{property}

This property requires that, if two gambles agree on $A$, it is not possible for exactly one to be optimal, conditional on any subset of $A$, unless there is no suitable $\mathcal{Z}$.

\begin{property}[Insensitivity of optimality to the omission of non-optimal elements]\label{prop:insensitivity:to:non:optimal:elements}
For any event $A\neq\emptyset$, and any non-empty finite $A$-consistent sets of gambles $\mathcal{X}$ and $\mathcal{Y}$,
\begin{equation*}
\opt(\mathcal{X}|A) \subseteq \mathcal{Y} \subseteq \mathcal{X} \Rightarrow \opt(\mathcal{Y}|A) = \opt(\mathcal{X}|A).
\end{equation*}
\end{property}
If $\opt$ satisfies this property, then removing non-optimal elements from a set does not affect whether or not each of the remaining elements is optimal. The property is called `insensitivity to the omission of non-optimal elements' by De~Cooman and Troffaes \cite{2005:decooman}, and `property $\epsilon$' by Sen \cite{1977:sen} who attributes this designation to Douglas Blair.

\begin{property}[Preservation of non-optimality under the addition of elements]\label{prop:preservation:under:addition:of:elements}
For any event $A\neq\emptyset$, and any non-empty finite $A$-consistent sets of gambles $\mathcal{X}$ and $\mathcal{Y}$,
\begin{equation*}
\mathcal{Y} \subseteq \mathcal{X} \Rightarrow \opt(\mathcal{Y}|A) \supseteq \opt(\mathcal{X}|A) \cap \mathcal{Y}.
\end{equation*}
\end{property}

This is called `property $\alpha$' by Sen \cite{1977:sen}, Axiom 7 by Luce and Raiffa \cite[p.~288]{1957:luce}, and `independence of irrelevant alternatives' by Radner and Marschak \cite{1954:radnermarschak}.\footnote{This is different from several other properties bearing the same name, such as that of Arrow's Impossibility Theorem. For further discussion, see Ray \cite{1973:ray}.} It states that any gamble that is non-optimal in a set of gambles $\mathcal{Y}$ is non-optimal in any set of gambles containing $\mathcal{Y}$.

Properties~\ref{prop:insensitivity:to:non:optimal:elements} and~\ref{prop:preservation:under:addition:of:elements} are together equivalent to the well-known property of path independence, which can often be checked more conveniently.

\begin{property}\label{prop:path:independence}
 A choice function $\opt$ is \emph{path independent} if, for any non-empty event $A$, and for any finite family of non-empty finite $A$-consistent sets of gambles $\mathcal{X}_1$, \dots, $\mathcal{X}_n$,
\begin{equation*}
 \opt\Bigg(\bigcup_{i=1}^n\mathcal{X}_i\Bigg|A\Bigg) = \opt\Bigg(\bigcup_{i=1}^n\opt(\mathcal{X}_i|A) \Bigg| A\Bigg).
\end{equation*}
\end{property}

Path independence appears frequently in the social choice literature. Plott \cite{1973:plott} gives a detailed investigation of path independence and its possible justifications. Path independence is also equivalent to Axiom $7'$ of Luce and Raiffa \cite[p.~289]{1957:luce}.

\begin{lemma}[Sen \protect{\cite[Proposition~19]{1977:sen}}]
  \label{lemma:opt:of:unions:equality}
  A choice function $\opt$ satisfies Properties~\ref{prop:insensitivity:to:non:optimal:elements} and~\ref{prop:preservation:under:addition:of:elements} if and only if $\opt$ satisfies Property~\ref{prop:path:independence}.
\end{lemma}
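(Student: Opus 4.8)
The statement to prove is Lemma~\ref{lemma:opt:of:unions:equality}: a choice function satisfies Properties~\ref{prop:insensitivity:to:non:optimal:elements} and~\ref{prop:preservation:under:addition:of:elements} together if and only if it satisfies path independence (Property~\ref{prop:path:independence}). This is an equivalence, so I plan to prove two implications separately. Throughout I fix a non-empty event $A$ and suppress it in the notation where convenient, writing $\opt(\mathcal{X})$ for $\opt(\mathcal{X}|A)$; all sets involved are assumed non-empty, finite, and $A$-consistent, and I will need to check at each application that the relevant unions and subsets remain $A$-consistent (this is routine, since $A$-consistency is preserved under union, and under passing to subsets that still cover the same reward values, but it must not be skipped).

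For the forward direction, assume Properties~\ref{prop:insensitivity:to:non:optimal:elements} and~\ref{prop:preservation:under:addition:of:elements}, and fix sets $\mathcal{X}_1,\dots,\mathcal{X}_n$; write $\mathcal{X}=\bigcup_{i=1}^n\mathcal{X}_i$ and $\mathcal{Y}=\bigcup_{i=1}^n\opt(\mathcal{X}_i|A)$. I want $\opt(\mathcal{X}|A)=\opt(\mathcal{Y}|A)$. The key observation is that $\opt(\mathcal{X}|A)\subseteq\mathcal{Y}\subseteq\mathcal{X}$: the second inclusion is immediate, and the first follows by applying Property~\ref{prop:preservation:under:addition:of:elements} with the pair $\mathcal{X}_i\subseteq\mathcal{X}$, which gives $\opt(\mathcal{X}_i|A)\supseteq\opt(\mathcal{X}|A)\cap\mathcal{X}_i$ for each $i$; taking the union over $i$ and using $\mathcal{X}=\bigcup_i\mathcal{X}_i$ yields $\mathcal{Y}\supseteq\opt(\mathcal{X}|A)$. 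Once the sandwich $\opt(\mathcal{X}|A)\subseteq\mathcal{Y}\subseteq\mathcal{X}$ is established, Property~\ref{prop:insensitivity:to:non:optimal:elements} applies directly and gives $\opt(\mathcal{Y}|A)=\opt(\mathcal{X}|A)$, which is exactly path independence.

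For the converse, assume path independence. To recover Property~\ref{prop:insensitivity:to:non:optimal:elements}, suppose $\opt(\mathcal{X}|A)\subseteq\mathcal{Y}\subseteq\mathcal{X}$. I plan to write $\mathcal{X}$ as a union of two sets, say $\mathcal{X}_1=\mathcal{Y}$ and $\mathcal{X}_2=\mathcal{X}$, so that $\mathcal{X}_1\cup\mathcal{X}_2=\mathcal{X}$, and apply path independence to get $\opt(\mathcal{X}|A)=\opt(\opt(\mathcal{Y}|A)\cup\opt(\mathcal{X}|A)|A)$. Here the inclusion $\opt(\mathcal{X}|A)\subseteq\mathcal{Y}$ must be exploited to simplify the argument of the outer $\opt$; a second application of path independence with a suitable decomposition of $\mathcal{Y}$ itself (for instance using the trivial one-set family) should collapse the nested expression down to $\opt(\mathcal{Y}|A)$. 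To recover Property~\ref{prop:preservation:under:addition:of:elements}, given $\mathcal{Y}\subseteq\mathcal{X}$ I will apply path independence to the two-set family $\{\mathcal{Y},\mathcal{X}\}$ (whose union is $\mathcal{X}$), obtaining $\opt(\mathcal{X}|A)=\opt(\opt(\mathcal{Y}|A)\cup\opt(\mathcal{X}|A)|A)$, and then argue that any element of $\opt(\mathcal{X}|A)\cap\mathcal{Y}$ survives into $\opt(\mathcal{Y}|A)$.

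The main obstacle I anticipate is the converse direction, specifically the bookkeeping needed to collapse the nested $\opt(\cdots\opt(\cdots)\cdots)$ expressions that path independence produces: path independence is stated as an equality between a single-level and a two-level application, so deriving the two separate monotonicity/insensitivity properties requires choosing the decompositions cleverly and invoking path independence more than once, and one must be careful that each intermediate set remains $A$-consistent and non-empty so that the hypotheses of Property~\ref{prop:path:independence} are genuinely met. By contrast, the forward direction is the easy half, essentially a direct substitution into the two properties. Since this lemma is attributed to Sen, I would expect the argument to be short once the right decompositions are identified, and I would present it as the standard equivalence of path independence with the conjunction of the two Sen conditions.
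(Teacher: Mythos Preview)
The paper does not supply its own proof of this lemma; it is simply attributed to Sen, so there is no in-paper argument to compare against. Your forward direction is correct and is the standard one: the sandwich $\opt(\mathcal{X}|A)\subseteq\mathcal{Y}\subseteq\mathcal{X}$ follows exactly as you describe, and then Property~\ref{prop:insensitivity:to:non:optimal:elements} finishes it. Your converse argument for Property~\ref{prop:insensitivity:to:non:optimal:elements} also goes through once you make explicit that path independence applied to the singleton family $\{\mathcal{X}\}$ gives idempotence $\opt(\opt(\mathcal{X}|A)\,|\,A)=\opt(\mathcal{X}|A)$, and then apply path independence to the family $\{\mathcal{Y},\opt(\mathcal{X}|A)\}$, whose union is $\mathcal{Y}$ by hypothesis.

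There is, however, a real gap in your converse argument for Property~\ref{prop:preservation:under:addition:of:elements}. With the decomposition $\{\mathcal{Y},\mathcal{X}\}$ you obtain
\[
\opt(\mathcal{X}|A)=\opt\bigl(\opt(\mathcal{Y}|A)\cup\opt(\mathcal{X}|A)\,\big|\,A\bigr),
\]
and you then want to conclude that any $y\in\opt(\mathcal{X}|A)\cap\mathcal{Y}$ lies in $\opt(\mathcal{Y}|A)$. But all this identity gives is $y\in\opt(\mathcal{Y}|A)\cup\opt(\mathcal{X}|A)$, which is vacuous because you already know $y\in\opt(\mathcal{X}|A)$. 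The remedy is to choose a decomposition of $\mathcal{X}$ whose pieces other than $\mathcal{Y}$ are \emph{disjoint} from $\mathcal{Y}$: when $\mathcal{Y}\subsetneq\mathcal{X}$, take the two-set family $\{\mathcal{Y},\,\mathcal{X}\setminus\mathcal{Y}\}$ (the case $\mathcal{Y}=\mathcal{X}$ is trivial). Path independence then yields $\opt(\mathcal{X}|A)\subseteq\opt(\mathcal{Y}|A)\cup\opt(\mathcal{X}\setminus\mathcal{Y}\,|\,A)$, and since $\opt(\mathcal{X}\setminus\mathcal{Y}\,|\,A)\subseteq\mathcal{X}\setminus\mathcal{Y}$ is disjoint from $\mathcal{Y}$, every $y\in\opt(\mathcal{X}|A)\cap\mathcal{Y}$ is forced into $\opt(\mathcal{Y}|A)$. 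Note that $\mathcal{X}\setminus\mathcal{Y}$ is $A$-consistent because condition~\ref{def:gambles:consistent:via:inverse:map} of Definition~\ref{def:gambles:consistent} is a per-gamble condition and hence inherited by any subset; your caveat about ``subsets that still cover the same reward values'' is unnecessary.
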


\begin{property}[Backward mixture property]\label{prop:backward:mixture:property}
For any events $A$ and $B$ such that $B\cap A \neq \emptyset$ and $B\cap\compl{A}\neq\emptyset$, any $B\cap\compl{A}$-consistent gamble $Z$, and any non-empty finite $B\cap A$-consistent set of gambles $\mathcal{X}$,
\begin{equation*}
\opt\left(A\mathcal{X} + \compl{A}Z|B\right) \subseteq A\opt(\mathcal{X}|A\cap B) + \compl{A}Z.
\end{equation*}
\end{property}

\begin{theorem}[Backward induction theorem]\label{thm:backopt:normopt:equivalence}
  Let $\opt$ be any choice function.
  The following conditions are equivalent.
  \begin{enumerate}[label=(\Alph*)]
  \item\label{thm:backopt:normopt:equivalence:backoptisnormopt} For any consistent decision tree $\tree$, it holds that $\backopt(\tree)=\normoper_{\opt}(\tree)$.
  \item\label{thm:backopt:normopt:equivalence:properties} $\opt$ satisfies Properties~\ref{prop:backward:conditioning:property}, \ref{prop:insensitivity:to:non:optimal:elements}, \ref{prop:preservation:under:addition:of:elements}, and~\ref{prop:backward:mixture:property}.
  \end{enumerate}
\end{theorem}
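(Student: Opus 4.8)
The plan is to prove both implications of the stated equivalence. For \ref{thm:backopt:normopt:equivalence:properties}$\Rightarrow$\ref{thm:backopt:normopt:equivalence:backoptisnormopt} I would argue by structural induction on $\tree$, proving the sharper statement that backward induction keeps exactly the optimal normal form gambles, $\normgambles(\backopt(\tree))=\opt(\normgambles(\tree)|\treeevent{\tree})$, which by~\eqref{eq:gambofnormoptisopt} equals $\normgambles(\normoper_{\opt}(\tree))$, together with the bookkeeping claim that a surviving substrategy is retained precisely when its gamble is optimal. The base case of a reward leaf is immediate, since there $\normgambles(\tree)$ is a single constant and any choice function returns it.

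For the inductive step I split on the type of the root. If the root is a decision node, $\tree=\bigdecnodeunion_{i=1}^n\tree_i$ with $\treeevent{\tree_i}=\treeevent{\tree}=:A$ and $\normgambles(\tree)=\bigcup_{i=1}^n\normgambles(\tree_i)$; applying the induction hypothesis in each branch and then $\opt$ at the root gives $\opt\left(\bigcup_i\opt(\normgambles(\tree_i)|A)\mid A\right)$, which equals $\opt(\bigcup_i\normgambles(\tree_i)|A)$ by path independence (Property~\ref{prop:path:independence}), available through Lemma~\ref{lemma:opt:of:unions:equality} from the assumed Properties~\ref{prop:insensitivity:to:non:optimal:elements} and~\ref{prop:preservation:under:addition:of:elements}; Property~\ref{prop:preservation:under:addition:of:elements} also justifies discarding a branch-nonoptimal substrategy early, since it forces any gamble nonoptimal within a branch to be nonoptimal in the full union. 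If the root is a chance node, $\tree=\bigchancenodemixture_{i=1}^n\event[i]\tree_i$ with $\treeevent{\tree_i}=\event[i]\cap A$ and $\normgambles(\tree)=\sum_{i=1}^n\event[i]\normgambles(\tree_i)$ as in Definition~\ref{def:gambsum:for:sets}; writing $\mathcal{Y}=\sum_i\event[i]\opt(\normgambles(\tree_i)|\event[i]\cap A)$ for the mixtures of conditionally optimal branch gambles, the backward mixture property (Property~\ref{prop:backward:mixture:property}), applied iteratively to peel off the branches one at a time, yields $\opt(\normgambles(\tree)|A)\subseteq\mathcal{Y}\subseteq\normgambles(\tree)$, so Property~\ref{prop:insensitivity:to:non:optimal:elements} gives $\opt(\mathcal{Y}|A)=\opt(\normgambles(\tree)|A)$, i.e. the two methods agree at the gamble level; the backward conditioning property (Property~\ref{prop:backward:conditioning:property}) is then exactly what lifts this gamble-level equality faithfully to substrategies, ensuring that substrategies whose gambles agree on a branch event are retained or discarded together.

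For the converse \ref{thm:backopt:normopt:equivalence:backoptisnormopt}$\Rightarrow$\ref{thm:backopt:normopt:equivalence:properties} I would show each property is forced by exhibiting a consistent tree on which the identity $\backopt=\normoper_{\opt}$ collapses to that property. The enabling device is Definition~\ref{def:gambles:consistent}: any $A$-consistent finite set of gambles is realized as $\normgambles(\tree)$ for a consistent tree with $\treeevent{\tree}=A$, so the data of each property can be encoded as subtrees. Properties~\ref{prop:insensitivity:to:non:optimal:elements} and~\ref{prop:preservation:under:addition:of:elements} (equivalently path independence) are read off from trees whose root is a decision node combining the given sets of gambles, where coincidence is literally the path independence identity. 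Properties~\ref{prop:backward:conditioning:property} and~\ref{prop:backward:mixture:property} are read off from trees whose root is a chance node splitting on $A$ and $\compl{A}$, with $\mathcal{X}$ placed on the $A$-branch and $Z$ (respectively $\mathcal{Z}$) on the $\compl{A}$-branch.

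The main obstacle is the chance-node analysis in both directions. In the sufficiency proof, Property~\ref{prop:backward:mixture:property} fixes the gamble on the complementary branch, so turning it into the inclusion over a full varying mixture set requires a careful induction on the number of branches; and obtaining the needed equality rests on ruling out, via Property~\ref{prop:backward:conditioning:property}, the pathology in which a substrategy nonoptimal conditional on a branch event nonetheless appears in an optimal full strategy through its behaviour on the complementary event, so that correctly matching gambles to substrategies is the delicate point. Correspondingly, in the necessity proof, isolating backward conditioning as precisely the right condition is the trickiest step, since its statement already carries the existential side-condition on $\mathcal{Z}$ that marks exactly when a conditionally nonoptimal substrategy must survive.
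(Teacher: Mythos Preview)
The paper does not actually prove Theorem~\ref{thm:backopt:normopt:equivalence}: it is stated without proof, with the rigorous treatment explicitly deferred to~\cite{2009:huntley:troffaes::isipta,2011:huntley:subtree:perfectness} (see the remarks immediately preceding the theorem and in the description of backward induction). Consequently there is no in-paper argument to compare your proposal against.

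That said, your outline matches the architecture one would expect from those references: structural induction on the tree for \ref{thm:backopt:normopt:equivalence:properties}$\Rightarrow$\ref{thm:backopt:normopt:equivalence:backoptisnormopt}, with path independence (via Lemma~\ref{lemma:opt:of:unions:equality}) handling decision nodes and the combination of Properties~\ref{prop:backward:mixture:property} and~\ref{prop:insensitivity:to:non:optimal:elements} handling chance nodes; and tailored tree constructions exploiting Definition~\ref{def:gambles:consistent} for the converse. The one place your sketch is thin is precisely where you flag it: at a chance node you must iterate Property~\ref{prop:backward:mixture:property} across branches while the remaining branches still vary over full sets rather than single gambles, and this needs Property~\ref{prop:preservation:under:addition:of:elements} as well to push the inclusion through; similarly, making Property~\ref{prop:backward:conditioning:property} do the gamble-to-strategy lifting requires care because distinct normal form decisions can induce the same gamble. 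These are genuine technical points rather than gaps in the overall plan, and they are exactly the issues the cited papers address in detail.
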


\section{Application to Coherent Lower Previsions}
\label{sec:ip:application}

In this section we investigate which of the choice functions for coherent lower previsions satisfy the conditions of Theorem~\ref{thm:backopt:normopt:equivalence}\ref{thm:backopt:normopt:equivalence:properties}. Some of the results are based on proofs for more general choice functions that can be found in the Appendix.

\subsection{Maximality}

Maximality is a strict partial order, so Properties~\ref{prop:insensitivity:to:non:optimal:elements} and~\ref{prop:preservation:under:addition:of:elements} hold, by Proposition~\ref{prop:maximality:path:independence}.

\begin{proposition}\label{prop:maximality:backward:conditioning:property}
  Maximality satisfies Property~\ref{prop:backward:conditioning:property}.
\end{proposition}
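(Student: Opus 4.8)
The plan is to prove something slightly stronger than the property demands: for maximality, whenever $AX=AY$ the two gambles $X$ and $Y$ are simultaneously maximal or non-maximal in $\mathcal{X}$ given $A\cap B$, so the hypothesis about the existence of a suitable $\mathcal{Z}$ never actually needs to be used. The required implication $X\in\optmax(\mathcal{X}|A\cap B)\Rightarrow Y\in\optmax(\mathcal{X}|A\cap B)$ then follows immediately.

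First I would transfer the hypothesis $AX=AY$ from $A$ to the conditioning event $A\cap B$. Since the indicator of an intersection factors as $I_{A\cap B}=I_AI_B$, multiplying $I_AX=I_AY$ by $I_B$ gives $(A\cap B)X=(A\cap B)Y$; that is, $X$ and $Y$ agree on the whole conditioning event. The central step is then to compare, for each $W\in\mathcal{X}$, whether $W$ dominates $X$ or $Y$ under $\lprpref{A\cap B}$. Using distributivity of the pointwise product, $(A\cap B)X=(A\cap B)Y$ yields $(A\cap B)(W-X)=(A\cap B)(W-Y)$ for every such $W$. Proposition~\ref{prop:lpr}\ref{prop:lpr:ax-is-ay}, applied with conditioning event $A\cap B$ rather than $A$, then gives $\lpr(W-X|A\cap B)=\lpr(W-Y|A\cap B)$, so $W\lprpref{A\cap B}X$ holds if and only if $W\lprpref{A\cap B}Y$ holds. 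Hence $X$ and $Y$ have exactly the same set of dominators in $\mathcal{X}$.

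I would finish by reading off the conclusion from the definition of $\optmax$: the statement $X\in\optmax(\mathcal{X}|A\cap B)$ means no $W\in\mathcal{X}$ satisfies $W\lprpref{A\cap B}X$, and by the equivalence just established this holds exactly when no $W\in\mathcal{X}$ satisfies $W\lprpref{A\cap B}Y$, i.e. when $Y\in\optmax(\mathcal{X}|A\cap B)$. I do not anticipate a genuine obstacle here; the only points requiring care are to condition on $A\cap B$ (not on $A$) when invoking Proposition~\ref{prop:lpr}\ref{prop:lpr:ax-is-ay}, and to note that the $A\cap B$-consistency of $\mathcal{X}$ ensures all the conditional lower previsions involved are well defined. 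It is worth remarking explicitly in the write-up that the antecedent of Property~\ref{prop:backward:conditioning:property} concerning $\mathcal{Z}$ is superfluous for maximality, since the equivalence of dominator sets holds unconditionally.
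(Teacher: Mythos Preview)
Your proposal is correct and follows essentially the same route as the paper: both prove the stronger statement that the hypothesis involving $\mathcal{Z}$ is unnecessary, by passing from $AX=AY$ to $(A\cap B)X=(A\cap B)Y$ and then invoking Proposition~\ref{prop:lpr}\ref{prop:lpr:ax-is-ay} to conclude that $\lpr(W-X|A\cap B)=\lpr(W-Y|A\cap B)$ for every $W\in\mathcal{X}$. The paper's write-up is slightly terser, but the logical content is identical.
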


\begin{proof}
  We prove a stronger result. Let $A$ be a non-empty event, $\mathcal{X}$ be a non-empty finite set of $A$-consistent gambles, and $\{X,Y\}\subseteq\mathcal{X}$ with $AX=AY$. We show that, for any event $B$ such that $A\cap B \ne \emptyset$, $X\in\optmax(\mathcal{X}|A\cap B)$ implies $Y\in\optmax(\mathcal{X}|A \cap B)$.
  
  If $X\in\optmax(\mathcal{X}|A\cap B)$, then for every $Z\in\mathcal{X}$, $\lpr(Z-X|A \cap B)\leq 0$. But $AX = AY$ implies $(A\cap B) X= (A \cap B) Y$, and, by Proposition~\ref{prop:lpr}\ref{prop:lpr:ax-is-ay}, $\lpr(Z-X|A \cap B)=\lpr(Z-Y|A \cap B)$, and so it immediately follows that $Y\in\optmax(\mathcal{X}|A \cap B)$.
\end{proof}

\begin{proposition}\label{prop:maximality:backward:mixture:property}
 Maximality satisfies Property~\ref{prop:backward:mixture:property}.
\end{proposition}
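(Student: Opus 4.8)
The plan is to take a typical optimal element of the mixed set and show its $A$-part is maximal conditional on $A\cap B$. Concretely, let $W\in\optmax(A\mathcal{X}+\compl{A}Z\mid B)$. By definition of the set $A\mathcal{X}+\compl{A}Z$, we may write $W=AX_0+\compl{A}Z$ for some $X_0\in\mathcal{X}$. Since $AX_0+\compl{A}Z=AX_1+\compl{A}Z$ exactly when $AX_0=AX_1$, it suffices to prove that this particular $X_0$ lies in $\optmax(\mathcal{X}\mid A\cap B)$; then $W\in A\optmax(\mathcal{X}\mid A\cap B)+\compl{A}Z$, which is the desired inclusion.

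I would establish $X_0\in\optmax(\mathcal{X}\mid A\cap B)$ by contraposition. Suppose not: then there is some $Y\in\mathcal{X}$ with $Y\lprpref{A\cap B}X_0$, i.e.\ $\lpr(Y-X_0\mid A\cap B)>0$. The key reduction is to transfer this strict conditional dominance on $A\cap B$ into strict dominance on $B$ of the corresponding mixed gamble, namely to show $\lpr\bigl(A(Y-X_0)\mid B\bigr)>0$. Indeed, $(AY+\compl{A}Z)-(AX_0+\compl{A}Z)=A(Y-X_0)$, so a positive value of $\lpr(A(Y-X_0)\mid B)$ means $AY+\compl{A}Z\lprpref{B}W$, contradicting $W\in\optmax(A\mathcal{X}+\compl{A}Z\mid B)$.

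The main obstacle, and the heart of the argument, is precisely this conditioning-to-unconditioning link. Writing $D=Y-X_0$ and $\mu=\lpr(D\mid A\cap B)>0$, I would invoke the generalized Bayes rule, Proposition~\ref{prop:lpr}\ref{prop:lpr:gbr}, applied with event $A$ conditional on $B$, to get $\lpr\bigl(A(D-\mu)\mid B\bigr)=0$, i.e.\ $\lpr(AD-\mu A\mid B)=0$. Then superadditivity, Proposition~\ref{prop:lpr}\ref{prop:lpr:addition}, followed by positive homogeneity, Proposition~\ref{prop:lpr}\ref{prop:lpr:multiplication}, gives
\begin{equation*}
  \lpr(AD\mid B)\ge\lpr(AD-\mu A\mid B)+\lpr(\mu A\mid B)=\mu\,\lpr(A\mid B).
\end{equation*}
It remains to note $\lpr(A\mid B)>0$: since $A\cap B\neq\emptyset$ and every $p\in\domlinprevs$ assigns strictly positive mass to each state, $E_p(I_A\mid B)=p(A\cap B)/p(B)>0$ for all $p$, whence $\lpr(A\mid B)>0$. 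Therefore $\lpr(AD\mid B)\ge\mu\,\lpr(A\mid B)>0$, completing the contradiction. I expect the only delicate point to be the clean bookkeeping of indicator gambles (so that $A(D-\mu)=AD-\mu A$ and the decomposition $AD=(AD-\mu A)+\mu A$ are handled correctly); the strict positivity assumption on $\domlinprevs$ is what rescues the step by guaranteeing $\lpr(A\mid B)>0$.
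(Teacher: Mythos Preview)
Your proof is correct and follows essentially the same route as the paper's. Both arguments reduce to showing that $\lpr(D\mid A\cap B)>0$ forces $\lpr(AD\mid B)>0$ via the generalized Bayes rule (Proposition~\ref{prop:lpr}\ref{prop:lpr:gbr}) combined with super-/sub-additivity and strict positivity of $\lpr(A\mid B)$; the only cosmetic differences are that the paper phrases it as a direct contrapositive (``$Y\notin\optmax(\mathcal{X}\mid A\cap B)\Rightarrow AY+\compl{A}Z\notin\optmax(A\mathcal{X}+\compl{A}Z\mid B)$'') and reaches the key inequality via the upper bound in Proposition~\ref{prop:lpr}\ref{prop:lpr:addition} together with conjugacy, whereas you use the lower bound directly.
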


\begin{proof}
 Consider events $A$ and $B$ such that $A\cap B\neq \emptyset$ and $\compl{A}\cap B\neq\emptyset$, a non-empty finite set of $A\cap B$-consistent gambles $\mathcal{X}$, and an $\compl{A}\cap B$-consistent gamble $Z$. To establish Property~\ref{prop:backward:mixture:property}, it suffices to show that for any $Y\in\mathcal{X}$,
\begin{equation*}
Y \notin \optmax(\mathcal{X}|A\cap B)
\implies
 AY+\compl{A}Z \notin \optmax (A\mathcal{X} \setplus \compl{A}Z | B).
\end{equation*}
If $Y \notin \optmax(\mathcal{X}|A\cap B)$ then there is an $X \in \mathcal{X}$ with $\lpr(X-Y|A\cap B) > 0$. The result follows if we show that $\lpr(AX + \compl{A}Z - (AY + \compl{A}Z) | B) > 0$.
By Proposition~\ref{prop:lpr}\ref{prop:lpr:addition}--\ref{prop:lpr:gbr}, 
\begin{align*}
 0 &=\lpr(A (X-Y - \lpr(X-Y|A\cap B) ) | B )\\
&\leq \lpr(A ( X-Y) | B) + \upr( -A \lpr(X - Y | A\cap B) | B) \\
&= \lpr(A(X-Y)|B) - \lpr(A \lpr(X-Y | A\cap B) | B) \\
&= \lpr(A(X-Y)|B) - \lpr(A|B) \lpr(X-Y|A\cap B)
\end{align*}
where we relied on $\lpr(X-Y|A\cap B)>0$ in the last step. So, indeed
\begin{equation*}
 \lpr(A(X-Y)|B) \geq \lpr(A|B) \lpr(X-Y|A\cap B) > 0.
\end{equation*}
\end{proof}

\begin{corollary}\label{cor:backward:induction:works:for:maximality}
   For any consistent decision tree $\tree$, it holds that $$\backmax(\tree)=\normoper_{\optmax}(\tree).$$
\end{corollary}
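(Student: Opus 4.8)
The plan is to verify that maximality satisfies all four properties required by the Backward Induction Theorem (Theorem~\ref{thm:backopt:normopt:equivalence}), and then simply invoke that theorem to conclude. The work is already almost entirely done in the preceding propositions, so the corollary should follow by assembling these pieces.

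First I would collect the four ingredients. Properties~\ref{prop:insensitivity:to:non:optimal:elements} and~\ref{prop:preservation:under:addition:of:elements} hold because maximality is induced by the strict partial order $\lprpref{\cdot}$; as stated in the text, this is Proposition~\ref{prop:maximality:path:independence}, which gives path independence and hence, via Lemma~\ref{lemma:opt:of:unions:equality}, the two properties. Proposition~\ref{prop:maximality:backward:conditioning:property} establishes Property~\ref{prop:backward:conditioning:property} (backward conditioning), and Proposition~\ref{prop:maximality:backward:mixture:property} establishes Property~\ref{prop:backward:mixture:property} (backward mixture). Thus maximality satisfies condition~\ref{thm:backopt:normopt:equivalence:properties} of the theorem.

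Second, I would apply Theorem~\ref{thm:backopt:normopt:equivalence}: since $\optmax$ satisfies all four properties, condition~\ref{thm:backopt:normopt:equivalence:properties} holds, so the equivalent condition~\ref{thm:backopt:normopt:equivalence:backoptisnormopt} holds, namely $\backopt = \normoper_{\opt}$ for every consistent tree, with $\opt = \optmax$. Since $\backmax$ is precisely $\back_{\optmax}$ and $\normoper_{\optmax}$ is the normal form operator for maximality, this is exactly the claimed identity $\backmax(\tree) = \normoper_{\optmax}(\tree)$.

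There is no real obstacle here: the corollary is a direct corollary, and the genuine content lies entirely in the two propositions proved just before it (and in Proposition~\ref{prop:maximality:path:independence} and Theorem~\ref{thm:backopt:normopt:equivalence}). The only point requiring a moment's care is the bookkeeping that maximality meets the precise hypotheses of each property (finiteness, $A$-consistency, and the partition/mixture structure), but these are exactly the settings in which the propositions were stated, so no extra argument is needed. Hence the proof is a one-line appeal to the theorem once the four properties are cited.
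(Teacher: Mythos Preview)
Your proposal is correct and takes essentially the same approach as the paper: cite Propositions~\ref{prop:maximality:path:independence}, \ref{prop:maximality:backward:conditioning:property}, and~\ref{prop:maximality:backward:mixture:property} to obtain all four properties, then invoke Theorem~\ref{thm:backopt:normopt:equivalence}. The paper's own proof is the one-line ``Immediate, from \dots'' that you anticipated.
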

\begin{proof}
  Immediate, from Propositions~\ref{prop:maximality:path:independence}, \ref{prop:maximality:backward:conditioning:property}, and~\ref{prop:maximality:backward:mixture:property}, and Theorem~\ref{thm:backopt:normopt:equivalence}.
\end{proof}

\subsection{E-admissibility}

Since E-admissibility is a union of maximality choice functions we have:

\begin{corollary}
 For any consistent decision tree $\tree$, it holds that $$\backE(\tree) = \normoper_{\optE}(\tree).$$
\end{corollary}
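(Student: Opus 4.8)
The plan is to verify, via Theorem~\ref{thm:backopt:normopt:equivalence}, that $\optE$ satisfies Properties~\ref{prop:backward:conditioning:property}, \ref{prop:insensitivity:to:non:optimal:elements}, \ref{prop:preservation:under:addition:of:elements}, and~\ref{prop:backward:mixture:property}, exploiting the decomposition $\optE(\mathcal{X}|A)=\bigcup_{p\in\domlinprevs}\opt_p(\mathcal{X}|A)$. The key observation is that each $\opt_p$ is itself a maximality choice function: it is maximality with respect to the precise prevision $E_p(\cdot|A)$, which is a coherent lower prevision (being self-conjugate, it satisfies all of Proposition~\ref{prop:lpr}). Hence the maximality results already established---Proposition~\ref{prop:maximality:path:independence} for Properties~\ref{prop:insensitivity:to:non:optimal:elements} and~\ref{prop:preservation:under:addition:of:elements}, Proposition~\ref{prop:maximality:backward:conditioning:property} for Property~\ref{prop:backward:conditioning:property}, and Proposition~\ref{prop:maximality:backward:mixture:property} for Property~\ref{prop:backward:mixture:property}---apply to every $\opt_p$. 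The task therefore reduces to showing that these four properties are preserved under the union over $p\in\domlinprevs$.

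Three of the four transfers are routine. For Property~\ref{prop:backward:conditioning:property}, I would use the stronger statement actually proved in Proposition~\ref{prop:maximality:backward:conditioning:property}: whenever $AX=AY$ and $A\cap B\neq\emptyset$, each $\opt_p$ treats $X$ and $Y$ interchangeably. Thus if $X\in\optE(\mathcal{X}|A\cap B)$, pick the witnessing $p$ with $X\in\opt_p(\mathcal{X}|A\cap B)$; then $Y\in\opt_p(\mathcal{X}|A\cap B)\subseteq\optE(\mathcal{X}|A\cap B)$, so the $\mathcal{Z}$-hypothesis is not even needed. For Property~\ref{prop:backward:mixture:property} the inclusion shape does the work: since $\opt_p(A\mathcal{X}+\compl{A}Z|B)\subseteq A\opt_p(\mathcal{X}|A\cap B)+\compl{A}Z$ for each $p$, taking the union of both sides and using that $Z$ is fixed gives $\optE(A\mathcal{X}+\compl{A}Z|B)\subseteq A\optE(\mathcal{X}|A\cap B)+\compl{A}Z$. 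For Property~\ref{prop:preservation:under:addition:of:elements} I would argue elementwise: if $X\in\optE(\mathcal{X}|A)\cap\mathcal{Y}$ with $\mathcal{Y}\subseteq\mathcal{X}$, a witnessing $p$ makes $X$ maximize $E_p(\cdot|A)$ over $\mathcal{X}\supseteq\mathcal{Y}$, hence over $\mathcal{Y}$, so $X\in\opt_p(\mathcal{Y}|A)\subseteq\optE(\mathcal{Y}|A)$.

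The main obstacle is the remaining half of Property~\ref{prop:insensitivity:to:non:optimal:elements}, namely $\optE(\mathcal{Y}|A)\subseteq\optE(\mathcal{X}|A)$ under the hypothesis $\optE(\mathcal{X}|A)\subseteq\mathcal{Y}\subseteq\mathcal{X}$; unlike the others, this cannot be done with a single fixed $p$. Here I would use that each $\opt_p$ maximizes a real functional, so $\opt_p(\mathcal{X}|A)$ is nonempty on any finite $\mathcal{X}$ and every maximizer dominates all of $\mathcal{X}$. Given $X\in\optE(\mathcal{Y}|A)$ witnessed by $p$, choose any $W\in\opt_p(\mathcal{X}|A)$; then $W\in\optE(\mathcal{X}|A)\subseteq\mathcal{Y}$, so $E_p(W|A)\leq E_p(X|A)$ because $X$ maximizes $E_p(\cdot|A)$ over $\mathcal{Y}$, while $E_p(X|A)\leq E_p(W|A)$ because $W$ maximizes over $\mathcal{X}\ni X$. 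Equality forces $X\in\opt_p(\mathcal{X}|A)\subseteq\optE(\mathcal{X}|A)$. This is precisely where the structure ``union of argmaxes of linear functionals'' is indispensable, and it is the content of the general Appendix lemmas on unions of totally-preordered choice functions; one could alternatively establish path independence (Property~\ref{prop:path:independence}) for $\optE$ directly by the same device and then invoke Lemma~\ref{lemma:opt:of:unions:equality}. With all four properties in hand, Theorem~\ref{thm:backopt:normopt:equivalence} yields $\backE(\tree)=\normoper_{\optE}(\tree)$ for every consistent $\tree$.
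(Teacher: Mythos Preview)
Your proposal is correct and follows the same route as the paper: view $\optE$ as the union over $p\in\domlinprevs$ of maximality choice functions $\opt_p$, transfer Properties~\ref{prop:backward:conditioning:property}--\ref{prop:backward:mixture:property} from each $\opt_p$ to the union, and invoke Theorem~\ref{thm:backopt:normopt:equivalence}. The paper packages the union-preservation step abstractly as Proposition~\ref{prop:opt:union:preserves:properties} and then simply cites it together with Corollary~\ref{cor:backward:induction:works:for:maximality}.

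One minor simplification worth noting: you describe the transfer of Property~\ref{prop:insensitivity:to:non:optimal:elements} as the ``main obstacle'' requiring the argmax structure of $\opt_p$, but the paper's abstract argument in Proposition~\ref{prop:opt:union:preserves:properties}\ref{prop:opt:union:preserves:properties:insensitivity:to:non:optimal:elements} avoids this. From the hypothesis $\optE(\mathcal{X}|A)\subseteq\mathcal{Y}$ one gets $\opt_p(\mathcal{X}|A)\subseteq\optE(\mathcal{X}|A)\subseteq\mathcal{Y}$ for every $p$, so Property~\ref{prop:insensitivity:to:non:optimal:elements} for $\opt_p$ alone already yields $\opt_p(\mathcal{Y}|A)=\opt_p(\mathcal{X}|A)$; taking the union over $p$ finishes. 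So the transfer \emph{can} be done one $p$ at a time, and no special total-order or argmax structure is needed.
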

\begin{proof}
 Immediate, from Proposition~\ref{prop:opt:union:preserves:properties}, Corollary~\ref{cor:backward:induction:works:for:maximality}, and Theorem~\ref{thm:backopt:normopt:equivalence}.
\end{proof}

Further, from Theorem~\ref{thm:backopt1:subseteq:backopt2}, we have:
\begin{corollary}
\label{cor:e:admissibility:from:maximality:backward:induction}
  For any consistent decision tree $\tree$,
  \begin{equation*}
    \normoper_{\optE}(\tree)=\normoper_{\optE}(\back_{\optmax}(\tree)).
  \end{equation*}
\end{corollary}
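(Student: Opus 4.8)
The plan is to prove the identity directly from Definition~\ref{def:normoper}, using only two facts about E-admissibility: that it refines maximality (every E-admissible gamble is maximal, so $\optE(\cdot\mid A)\subseteq\optmax(\cdot\mid A)$ for every $A$) and that it satisfies Property~\ref{prop:insensitivity:to:non:optimal:elements} (which holds since E-admissibility meets the backward induction conditions). Throughout write $A=\treeevent{\tree}$ and $\mathcal{G}=\normgambles(\tree)$, and read $\normoper_{\optE}(\back_{\optmax}(\tree))$ as $\normoper_{\optE}$ applied to the subtree whose normal form decisions are exactly the elements of $\back_{\optmax}(\tree)$, so that the conditioning event is still $A$.

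First I would rewrite the right-hand side. By Corollary~\ref{cor:backward:induction:works:for:maximality}, $\back_{\optmax}(\tree)=\normoper_{\optmax}(\tree)$, and by \eqref{eq:gambofnormoptisopt} the associated gambles are $\normgambles(\normoper_{\optmax}(\tree))=\optmax(\mathcal{G}\mid A)$. Hence, by Definition~\ref{def:normoper},
\[
\normoper_{\optE}(\back_{\optmax}(\tree))=\{U\in\normoper_{\optmax}(\tree)\colon\normgambles(U)\subseteq\optE(\optmax(\mathcal{G}\mid A)\mid A)\}.
\]
The key step is then the set identity $\optE(\optmax(\mathcal{G}\mid A)\mid A)=\optE(\mathcal{G}\mid A)$: since $\optE(\mathcal{G}\mid A)\subseteq\optmax(\mathcal{G}\mid A)\subseteq\mathcal{G}$, applying Property~\ref{prop:insensitivity:to:non:optimal:elements} for $\optE$ with $\mathcal{Y}=\optmax(\mathcal{G}\mid A)$ gives exactly this. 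This reduces the right-hand side to $\{U\in\normoper_{\optmax}(\tree)\colon\normgambles(U)\subseteq\optE(\mathcal{G}\mid A)\}$.

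Finally I would drop the restriction to $\normoper_{\optmax}(\tree)$. If $U\in\nfd(\tree)$ satisfies $\normgambles(U)\subseteq\optE(\mathcal{G}\mid A)$, then because $\optE(\mathcal{G}\mid A)\subseteq\optmax(\mathcal{G}\mid A)$ we also have $\normgambles(U)\subseteq\optmax(\mathcal{G}\mid A)$, so $U\in\normoper_{\optmax}(\tree)$. Thus enlarging the index set from $\normoper_{\optmax}(\tree)$ back to all of $\nfd(\tree)$ introduces no new decisions, and the resulting set is precisely $\normoper_{\optE}(\tree)$ by Definition~\ref{def:normoper}.

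The only genuine obstacle is conceptual rather than computational: one must recognize that the relevant ingredient is insensitivity of $\optE$ to the omission of maximality-non-optimal elements, and verify that pruning with maximality first cannot discard any E-admissible decision. Both rest on the single inclusion $\optE\subseteq\optmax$, so once that is isolated the argument is routine. The bookkeeping of treating $\back_{\optmax}(\tree)$ as a tree versus as a set of normal form decisions deserves a sentence but presents no real difficulty.
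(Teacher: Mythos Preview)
Your argument is correct, but it follows a different route from the paper. The paper simply invokes Theorem~\ref{thm:backopt1:subseteq:backopt2} with $\opt_1=\optE$ and $\opt_2=\optmax$: since $\optE$ satisfies all four Properties~\ref{prop:backward:conditioning:property}--\ref{prop:backward:mixture:property} and $\optE\subseteq\optmax$, the general result gives $\normoper_{\optE}(\tree)=\normoper_{\optE}(\back_{\optmax}(\tree))$ immediately. By contrast, you bypass Theorem~\ref{thm:backopt1:subseteq:backopt2} entirely and argue directly from Definition~\ref{def:normoper}: you use Corollary~\ref{cor:backward:induction:works:for:maximality} to replace $\back_{\optmax}(\tree)$ by $\normoper_{\optmax}(\tree)$, then reduce everything to the single set identity $\optE(\optmax(\mathcal{G}\mid A)\mid A)=\optE(\mathcal{G}\mid A)$, which follows from Property~\ref{prop:insensitivity:to:non:optimal:elements} for $\optE$ together with $\optE\subseteq\optmax$. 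Your approach is more elementary and uses strictly less about $\optE$ (only Property~\ref{prop:insensitivity:to:non:optimal:elements}, not all four), at the cost of leaning on the already-proved fact that backward induction works for maximality; the paper's route is shorter once the general theorem is in hand and does not rely on $\opt_2$ itself satisfying the backward induction conditions.
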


\subsection{Interval Dominance}

By Proposition~\ref{prop:maximality:path:independence}, interval dominance satisfies Properties~\ref{prop:insensitivity:to:non:optimal:elements} and~\ref{prop:preservation:under:addition:of:elements}, and it satisfies Property~\ref{prop:backward:conditioning:property} because $AX=AY$ implies $\lpr(X|A)=\lpr(Y|A)$ and $\upr(X|A)=\upr(Y|A)$. We now show that interval dominance fails Property~\ref{prop:backward:mixture:property}.

\begin{example}\label{exam:intdom}
  Suppose $A$ and $B$ are events, and $X$, $Y$, and $Z$ are the gambles given in Table~\ref{tab:interval:dominance:example}. Let $\domlinprevs$ contain all mass functions $P$ such that $A$ and $B$ are independent, $1/4\leq P(A) \leq 3/4$, and $P(B) = 1/2$. Let $\lpr$ be the lower envelope of $\domlinprevs$.

  \begin{table}
    \begin{center}
    \hfill
    \begin{tabular}{r|cc}
      & $A$ & $\compl{A}$ \\
      \hline
      $X$ & $1$ & $1$ \\
      $Y$ & $1.5$ & $3.5$ \\
      $Z$ & $0$ & $4$
    \end{tabular}
    \hfill
    \begin{tabular}{r|cc}
      & $\lpr(\cdot|B)$ & $\upr(\cdot|B)$ \\
      \hline
      $X$ & $1$ & $1$ \\
      $Y$ & $2$ & $3$ \\
      $Z$ & $1$ & $3$ \\
    \end{tabular}
    \hfill
    \begin{tabular}{r|cc}
      & $\lpr$ & $\upr$ \\
      \hline
      $BX+\compl{B}Z$ & $1$ & $2$ \\
      $BY+\compl{B}Z$ & $1.5$ & $3$
    \end{tabular}
    \hfill
    \end{center}
    \caption{Gambles and their lower and upper previsions for Example~\ref{exam:intdom}.}
    \label{tab:interval:dominance:example}
  \end{table}

  Lower and upper previsions of relevant gambles are given in Table~\ref{tab:interval:dominance:example}; for example,
  \begin{align*}
    \upr(BY+\compl{B}Z)
    &=
    \max_{P\in\domlinprevs}
    P(BY+\compl{B}Z)
    =
    \max_{P\in\domlinprevs}
    P(B)P(Y|B)+P(\compl{B})P(Z|\compl{B})
    \\
    &=
    \frac{1}{2}
    \max_{P\in\domlinprevs}(P(Y)+P(Z))
    =
    \frac{1}{2}\max_{p\in[\frac{1}{4},\frac{3}{4}]}(1.5(1-p)+3.5p+4p)
    =
    3
  \end{align*}
  and similar for all other gambles. Clearly, $Y$ interval dominates $X$ conditional on $B$, however, $BY+\compl{B}Z$ does not interval dominate $BX+\compl{B}Z$, violating Property~\ref{prop:backward:mixture:property}.
\end{example}

Even though interval dominance violates Property~\ref{prop:backward:mixture:property}, it can still be of use in backward induction. It is easily shown that (see for instance Troffaes \cite{2007:troffaes})
\begin{equation*}
 \optmax(\mathcal{X}|A) \subseteq \optint(\mathcal{X}|A).
\end{equation*}
By Theorem~\ref{thm:backopt1:subseteq:backopt2}, we therefore have:

\begin{corollary}
  For any consistent decision tree $\tree$,
  \begin{align*}
    \normoper_{\optmax}(\tree)
    &=\normoper_{\optmax}(\back_{\optint}(\tree))
    \\
    \normoper_{\optE}(\tree)
    &=\normoper_{\optE}(\back_{\optint}(\tree))
  \end{align*}
\end{corollary}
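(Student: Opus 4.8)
The plan is to invoke Theorem~\ref{thm:backopt1:subseteq:backopt2} twice, once for each displayed equation, since all the real work is packaged inside that theorem. I read Theorem~\ref{thm:backopt1:subseteq:backopt2} as the assertion that whenever $\opt_1(\mathcal{X}|A)\subseteq\opt_2(\mathcal{X}|A)$ for every non-empty event $A$ and every $A$-consistent set $\mathcal{X}$, and whenever $\opt_1$ satisfies the four backward induction properties (Properties~\ref{prop:backward:conditioning:property}, \ref{prop:insensitivity:to:non:optimal:elements}, \ref{prop:preservation:under:addition:of:elements}, \ref{prop:backward:mixture:property}), then $\normoper_{\opt_1}(\tree)=\normoper_{\opt_1}(\back_{\opt_2}(\tree))$ for every consistent $\tree$. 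Granting this, the corollary reduces entirely to checking, in each case, one pointwise set-inclusion between the two choice functions and the four properties for the smaller one.

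For the first equation I would set $\opt_1=\optmax$ and $\opt_2=\optint$. The inclusion $\optmax(\mathcal{X}|A)\subseteq\optint(\mathcal{X}|A)$ is precisely the one recalled immediately before the corollary. That maximality satisfies all four properties is already in hand: Properties~\ref{prop:insensitivity:to:non:optimal:elements} and~\ref{prop:preservation:under:addition:of:elements} hold by Proposition~\ref{prop:maximality:path:independence}, Property~\ref{prop:backward:conditioning:property} by Proposition~\ref{prop:maximality:backward:conditioning:property}, and Property~\ref{prop:backward:mixture:property} by Proposition~\ref{prop:maximality:backward:mixture:property} (these are exactly what Corollary~\ref{cor:backward:induction:works:for:maximality} bundles together). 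So Theorem~\ref{thm:backopt1:subseteq:backopt2} applies verbatim and delivers the first equality.

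For the second equation I would set $\opt_1=\optE$ and $\opt_2=\optint$. The required inclusion $\optE(\mathcal{X}|A)\subseteq\optint(\mathcal{X}|A)$ follows by transitivity from $\optE(\mathcal{X}|A)\subseteq\optmax(\mathcal{X}|A)$ (``any E-admissible gamble is maximal'', noted in the E-admissibility subsection) together with $\optmax(\mathcal{X}|A)\subseteq\optint(\mathcal{X}|A)$. That E-admissibility inherits the four properties was established in the E-admissibility subsection, via Proposition~\ref{prop:opt:union:preserves:properties} applied to the fact (Corollary~\ref{cor:backward:induction:works:for:maximality}) that maximality has them. A second appeal to Theorem~\ref{thm:backopt1:subseteq:backopt2} then yields the second equality.

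The one point I would stress — and the reason the corollary is not vacuous — is the asymmetry of the hypotheses: it is the \emph{smaller} choice function $\opt_1$ (maximality, respectively E-admissibility) that must satisfy the backward induction properties, \emph{not} the larger $\opt_2=\optint$. This matters because $\optint$ itself fails Property~\ref{prop:backward:mixture:property} (Example~\ref{exam:intdom}), so interval-dominance backward induction cannot reproduce its own normal form. The corollary instead exploits interval dominance as a cheap, conservative over-approximation: $\back_{\optint}$ can only discard strategies already non-optimal under the finer criterion, so re-filtering its output with $\normoper_{\optmax}$ (respectively $\normoper_{\optE}$) recovers exactly the normal form solution of the original tree. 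Consequently there is no genuine obstacle within the corollary itself; the entire difficulty has been offloaded onto Theorem~\ref{thm:backopt1:subseteq:backopt2}, which I am assuming.
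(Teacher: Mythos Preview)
Your proposal is correct and matches the paper's approach exactly: the paper simply states ``By Theorem~\ref{thm:backopt1:subseteq:backopt2}, we therefore have'' immediately before the corollary, relying on the inclusion $\optmax\subseteq\optint$ just displayed, and you have correctly unpacked the hypotheses needed for both applications (including the chain $\optE\subseteq\optmax\subseteq\optint$ and the four properties for $\optE$ via Proposition~\ref{prop:opt:union:preserves:properties}). Your commentary on the asymmetry of the hypotheses is accurate and helpful but goes beyond what the paper spells out.
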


It can also be shown that $\back_{\optint}(\tree)\subseteq\normoper_{\optint}(\tree)$ for all $\tree$, so all strategies found by backward induction will be optimal with respect to $\optint$.

\subsection{$\Gamma$-maximin}

$\Gamma$-maximin fails Theorem~\ref{thm:backopt:normopt:equivalence}\ref{thm:backopt:normopt:equivalence:backoptisnormopt}: see for example Seidenfeld \cite[Sequential Example~1, pp.~75--77]{2004:seidenfeld}. Since $\Gamma$-maximin is induced by an ordering, it satisfies Properties~\ref{prop:insensitivity:to:non:optimal:elements} and~\ref{prop:preservation:under:addition:of:elements} by Proposition~\ref{prop:maximality:path:independence}. As for interval dominance, $\Gamma$-maximin satisfies Property~\ref{prop:backward:conditioning:property}. Hence, $\Gamma$-maximin must fail Property~\ref{prop:backward:mixture:property}. Indeed, backward induction can fail in a particularly serious way: it can select a single gamble that is inferior to another normal form gamble. Hence, backward induction may not find any $\Gamma$-maximin gambles.

\section{The Oil Wildcatter Example}
\label{sec:example:two}

We now illustrate our algorithm using the same example as Kikuti et al.~\cite[Fig.~2]{2005:kikuti}. Fig.~\ref{fig:oil:tree} depicts the decision tree, with utiles in units of \$10000. The subject must decide whether to drill for oil ($d_2$) or not ($d_1$). Drilling costs $7$ and provides a return of $0$, $12$, or $27$ depending on the richness of the site. The events $S_1$ to $S_3$ represent the different yields, with $S_1$ being the least profitable and $S_3$ the most. The subject may pay $1$ to test the site before deciding whether to drill; this gives one of three results $T_1$ to $T_3$, where $T_1$ is the most pessimistic and $T_3$ the most optimistic.

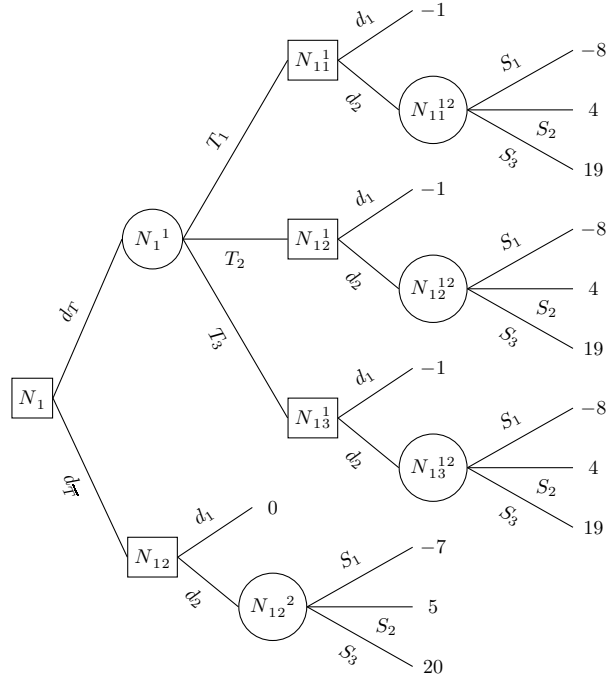
\begin{figure}
  \begin{center}
    \begin{tikzpicture}
      [minimum size=2em,parent anchor=east,child anchor=west,grow'=east,scale=0.75,transform shape]
        \node[draw,rectangle]{$\decnode[1]$}
        [sibling distance=16em,level distance=6em]
        child{
          node[draw,circle]{$\chancenode[1](1)$}
          [sibling distance=9em,level distance=8em]
          child{
            node[draw,rectangle]{$\decnode[1](1)[1]$}
            [sibling distance=5em,level distance=6em]
            child{
              node{$-1$}
              edge from parent
              node[above,sloped]{$d_1$}
            }
            child{
              node[draw,circle]{$\chancenode[1](1)[1](2)$}
              [sibling distance=3em,level distance=8em]
              child{
                node{$-8$}
                edge from parent
                node[above,sloped]{$S_1$}
              }
              child{
                node{$4$}
                edge from parent
                node[below,near end]{$S_2$}
              }
              child{
                node{$19$}
                edge from parent
                node[below,sloped]{$S_3$}
              }
              edge from parent
              node[below,sloped]{$d_2$}
            }
            edge from parent
            node[above,sloped]{$T_1$}
          }
          child{
            node[draw,rectangle]{$\decnode[1](1)[2]$}
            [sibling distance=5em,level distance=6em]
            child{
              node{$-1$}
              edge from parent
              node[above,sloped]{$d_1$}
            }
            child{
              node[draw,circle]{$\chancenode[1](1)[2](2)$}
              [sibling distance=3em,level distance=8em]
              child{
                node{$-8$}
                edge from parent
                node[above,sloped]{$S_1$}
              }
              child{
                node{$4$}
                edge from parent
                node[below,near end]{$S_2$}
              }
              child{
                node{$19$}
                edge from parent
                node[below,sloped]{$S_3$}
              }
              edge from parent
              node[below,sloped]{$d_2$}
            }
            edge from parent
            node[below,sloped]{$T_2$}
          }
          child{
            node[draw,rectangle]{$\decnode[1](1)[3]$}
            [sibling distance=5em,level distance=6em]
            child{
              node{$-1$}
              edge from parent
              node[above,sloped]{$d_1$}
            }
            child{
              node[draw,circle]{$\chancenode[1](1)[3](2)$}
              [sibling distance=3em,level distance=8em]
              child{
                node{$-8$}
                edge from parent
                node[above,sloped]{$S_1$}
              }
              child{
                node{$4$}
                edge from parent
                node[below,near end]{$S_2$}
              }
              child{
                node{$19$}
                edge from parent
                node[below,sloped]{$S_3$}
              }
              edge from parent
              node[below,sloped]{$d_2$}
            }
            edge from parent
            node[below,sloped]{$T_3$}
          }
          edge from parent
          node[above,sloped]{$d_T$}
        }
        child{
          node[draw,rectangle]{$\decnode[1][2]$}
          [sibling distance=5em,level distance=6em]
          child{
            node{$0$}
            edge from parent
            node[above,sloped]{$d_1$}
          }
          child{
            node[draw,circle]{$\chancenode[1][2](2)$}
            [sibling distance=3em,level distance=8em]
              child{
                node{$-7$}
                edge from parent
                node[above,sloped]{$S_1$}
              }
              child{
                node{$5$}
                edge from parent
                node[below,near end]{$S_2$}
              }
              child{
                node{$20$}
                edge from parent
                node[below,sloped]{$S_3$}
              }
            edge from parent
            node[below,sloped]{$d_2$}
          }
          edge from parent
          node[below,sloped]{$d_{\compl{T}}$}
        };
    \end{tikzpicture}
    \caption{Decision tree for the oil wildcatter.}
  \label{fig:oil:tree}
  \end{center}
\end{figure}

Lower and upper probabilities are given for each $T_i$ (Table~\ref{tab:oil:t}), and for each $S_i$ conditional on $T_i$ (Table~\ref{tab:oil}). (Some intervals are tighter than those in Kikuti et al., since their values are incoherent---we corrected these by natural extension \cite[\S 3.1]{1991:walley}.)

\begin{table}
  \begin{center}
    \begin{tabular}{ccc}
      $T_1$ & $T_2$ & $T_3$ \\
      \hline
      $0.183$, $0.222$ &$0.333$, $0.363$ &$0.444$, $0.454$
    \end{tabular}
  \caption{Unconditional lower and upper probabilities $\lpr(T_i)$ and $\upr(T_i)$ for oil example.}
  \label{tab:oil:t}
  \end{center}
\end{table}

\begin{table}
  \begin{center}
    \begin{tabular}{r|ccc}
      & $T_1$ & $T_2$ & $T_3$ \\
      \hline
      $S_1$ &$0.547$, $0.653$& $0.222$, $0.333$& $0.111$, $0.166$ \\
      $S_2$ &$0.222$, $0.272$ & $0.363$, $0.444$& $0.333$, $0.363$ \\
      $S_3$ &$0.125$, $0.181$ &$0.250$, $0.363$ &$0.471$, $0.556$
    \end{tabular}
  \caption{Conditional lower and upper probabilities $\lpr(S_i|T_i)$ and $\upr(S_i|T_i)$ for oil example.}
  \label{tab:oil}
  \end{center}
\end{table}

By \emph{marginal extension}~\cite[\S 6.7.2]{1991:walley}, the lower prevision of a gamble $Z$ is then
\begin{equation*}
  \lpr(Z)=\lpr(T_1\lpr(Z|T_1) + T_2\lpr(Z|T_2) + T_3\lpr(Z|T_3)).
\end{equation*}

Let $X=-7 S_1 + 5 S_2 + 20 S_3$, and again let $\tree^*_*=\subtreeat{\tree}{\decnode^*_*}$. Since we will only be concerned with maximality, and normal form decisions in this problem are uniquely identified by their gambles, we can conveniently work with gambles in this example. Therefore, we use the following notation:
\begin{align*}
  \opt&=\optmax & \back&=\normgambles\circ\backmax & \normoper&=\normgambles\circ\normoper_{\optmax}
\end{align*}

\begin{figure}
  \begin{center}
    \begin{enumerate}[label=(\roman*)]
    \item
    \begin{tikzpicture}
      [baseline=0,minimum size=2em,parent anchor=east,child anchor=west,grow'=east,scale=0.75,transform shape]
        \node[draw,rectangle]{$\decnode[1]$}
        [sibling distance=7em,level distance=6em]
        child{
          node[draw,circle]{$\chancenode[1](1)$}
          [sibling distance=4em,level distance=8em]
          child{
            node[draw,rectangle]{$\decnode[1](1)[1]$}
            [sibling distance=2em,level distance=6em]
            child{
              node[right]{$\{-1\}$}
              edge from parent
              node[above,sloped]{$d_1$}
            }
            child{
              node[right]{$\{X-1\}$}
              edge from parent
              node[below,sloped]{$d_2$}
            }
            edge from parent
            node[above,sloped]{$T_1$}
          }
          child{
            node[draw,rectangle]{$\decnode[1](1)[2]$}
            [sibling distance=2em,level distance=6em]
            child{
              node[right]{$\{-1\}$}
              edge from parent
              node[above,sloped]{$d_1$}
            }
            child{
              node[right]{$\{X-1\}$}
              edge from parent
              node[below,sloped]{$d_2$}
            }
            edge from parent
            node[below,sloped]{$T_2$}
          }
          child{
            node[draw,rectangle]{$\decnode[1](1)[3]$}
            [sibling distance=2em,level distance=6em]
            child{
              node[right]{$\{-1\}$}
              edge from parent
              node[above,sloped]{$d_1$}
            }
            child{
              node[right]{$\{X-1\}$}
              edge from parent
              node[below,sloped]{$d_2$}
            }
            edge from parent
            node[below,sloped]{$T_3$}
          }
          edge from parent
          node[above,sloped]{$d_T$}
        }
        child{
          node[draw,rectangle]{$\decnode[1][2]$}
          [sibling distance=2em,level distance=6em]
          child{
            node[right]{$\{0\}$}
            edge from parent
            node[above,sloped]{$d_1$}
          }
          child{
            node[right]{$\{X\}$}
            edge from parent
            node[below,sloped]{$d_2$}
          }
          edge from parent
          node[below,sloped]{$d_{\compl{T}}$}
        };
    \end{tikzpicture}
    \item
    \begin{tikzpicture}
      [baseline=0,minimum size=2em,parent anchor=east,child anchor=west,grow'=east,scale=0.75,transform shape]
        \node[draw,rectangle]{$\decnode[1]$}
        [sibling distance=5em,level distance=6em]
        child{
          node[draw,circle]{$\chancenode[1](1)$}
          [sibling distance=2em,level distance=8em]
          child{
            node[right]{$\opt(\{-1,X-1\}|T_1)=\{-1,X-1\}$}
            edge from parent
            node[above,sloped,pos=0.9]{$T_1$}
          }
          child{
            node[right]{$\opt(\{-1,X-1\}|T_2)=\{X-1\}$}
            edge from parent
            node[below,sloped,pos=0.9]{$T_2$}
          }
          child{
            node[right]{$\opt(\{-1,X-1\}|T_3)=\{X-1\}$}
            edge from parent
            node[below,sloped,pos=0.9]{$T_3$}
          }
          edge from parent
          node[above,sloped]{$d_T$}
        }
        child{
          node[right]{$\opt(\{0,X\})=\{X\}$}
          edge from parent
          node[below,sloped]{$d_{\compl{T}}$}
        };
    \end{tikzpicture}
    \item
    \begin{tikzpicture}
      [baseline=0,minimum size=2em,parent anchor=east,child anchor=west,grow'=east,scale=0.75,transform shape]
        \node[draw,rectangle]{$\decnode[1]$}
        [sibling distance=3em,level distance=6em]
        child{
          node[right]{$\opt(\{T_1(-1)+T_2(X-1)+T_3(X-1),T_1(X-1)+T_2(X-1)+T_3(X-1)\})=\{(T_2+T_3)X-1,X-1\}$}
          edge from parent
          node[above,sloped]{$d_T$}
        }
        child{
          node[right]{$\{X\}$}
          edge from parent
          node[below,sloped]{$d_{\compl{T}}$}
        };
    \end{tikzpicture}
    \item
    \begin{tikzpicture}
      [baseline=0,minimum size=2em,parent anchor=east,child anchor=west,grow'=east,scale=0.75,transform shape]
        \node{$\opt(\{(T_2+T_3)X-1,X-1,X\})=\{X\}$};
    \end{tikzpicture}
      \end{enumerate}
    \caption{Solving the oil wildcatter example by normal form backward induction.}
  \label{fig:oil:tree:solution}
  \end{center}
\end{figure}
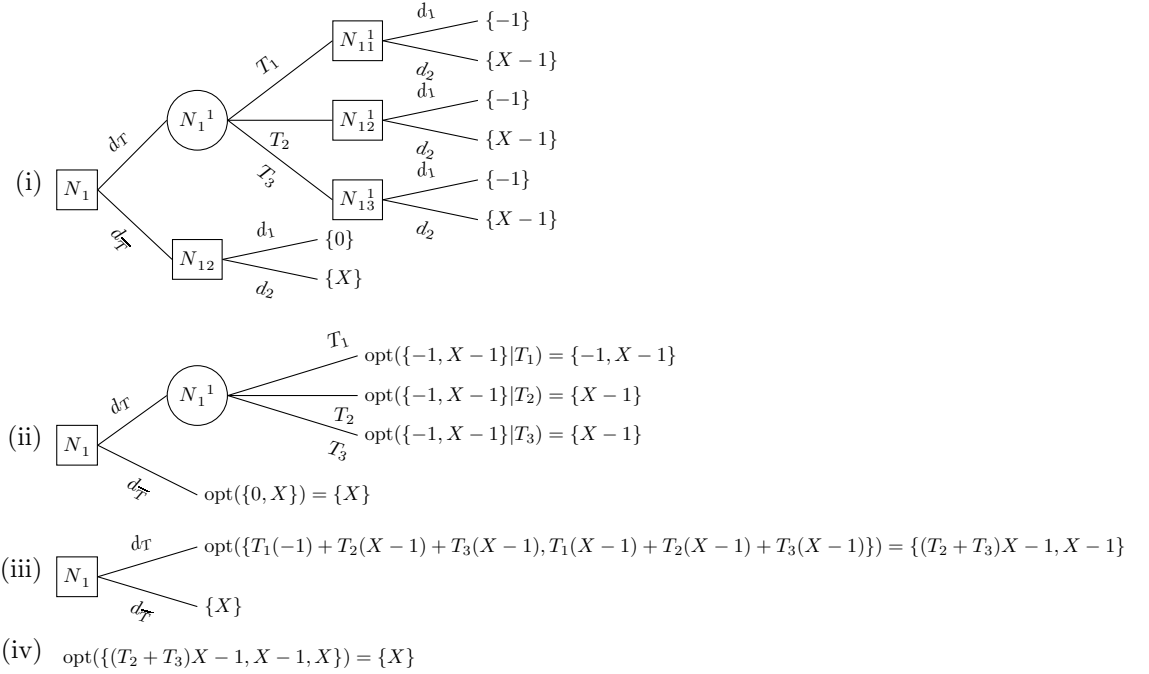

Fig.~\ref{fig:oil:tree:solution} depicts the process of backward induction described next.
\begin{enumerate}[label=(\roman*)]
\item
Of course, $\back(\cdot)$ at the final chance nodes simply reports the gamble: $\back(\subtree[1](1)[1](2))=\back(\subtree[1](2)[1](2))=\back(\subtree[1](3)[1](2))=\{X-1\}$, and $\back(\subtree[1][2](2))=\{X\}$.
\item 
For $\subtree[1](1)[1]$, we must find $\lpr((X-1) - (-1)|T_1)$ and $\lpr(-1 - (X-1) |T_1)$. These lower previsions can be computed using Table~\ref{tab:oil} as follows: $X$ will have lowest expected value when the worst outcome $S_1$ is most likely (probability $0.653$) and the best outcome $S_3$ is least likely (probability $0.125$), and so the probability of $S_2$ is $0.222$. So, $\lpr(X|T_1)=-7\times0.653 + 5\times0.222 + 20\times0.125=-0.961$. Similarly, $\lpr(-X|T_1)=-1.151$. Neither of these is positive, so $\back(\subtree[1](1)[1])=\{X-1, -1\}$.

For $\subtree[1](1)[2]$, $\lpr((X-1)-(-1)|T_2)=4.754$, and therefore $d_2$ dominates $d_1$, so $\back(\subtree[1](1)[2])=\{X-1\}$. Similarly, $\lpr((X-1)-(-1)|T_3)=10.073$, so $\back(\subtree[1](1)[3])=\{X-1\}$.

For $\subtree[1][2]$, we need to find $\lpr(X-0)$. By marginal extension we have
\begin{align*}
  \lpr(X)&=\lpr(T_1 \lpr(X|T_1)+ T_2 \lpr(X|T_2)+ T_3\lpr(X|T_3)) \\
  &= \lpr(-0.961 T_1 + 4.754 T_2 + 10.073 T_3) \\
  &= 0.222 \times -0.961 + 0.334 \times 4.754 + 0.444 \times 10.073
  = 5.846906.
\end{align*}
This is greater than zero, so $\back(\subtree[1][2])=\{X\}$.

\item
At $\subtree[1](1)$ there are two potentially optimal gambles: $T_1 (X-1)+ T_2 (X-1) + T_3 (X-1)=X-1$ and $T_1(-1) + T_2 (X-1) + T_3(X-1)=(T_2+T_3)X-1$. We must find $\lpr((X-1)-((T_2+T_3)X-1))=\lpr(T_1 X)$ and $\lpr(((T_2+T_3)X-1)-(X-1))=\lpr(-T_1X)$. Using marginal extension,
\begin{align*}
  \lpr(T_1 X)&=\lpr(T_1 \lpr(X|T_1))=\lpr(-0.961 T_1)=0.222\times -0.961=-0.213342<0, \\
  \lpr(-X T_1)&=\lpr(T_1\lpr(-X|T_1))=\lpr(-1.151 T_1)=0.222\times -1.151=-0.255522<0,
\end{align*}
so $\back(\subtree[1](1))=\{X-1, (T_2+T_3)X-1\}$.

\item
Finally, for $\tree$, we must consider $\{X, X-1, (T_2+T_3)X-1\}$. It is clear that $\lpr(X- (X-1))=1>0$, so $X-1$ can be eliminated. It is also clear that if a gamble does not dominate $X-1$ then is also does not dominate $X$, so by our calculation at $\subtree[1](1)$ we know that $X$ is maximal. We finally have
\begin{align*}
  \lpr(X-((T_2+T_3)X-1)) &= \lpr(T_1 X + 1)=\lpr(T_1X)+1=
 -0.213342+1>0,
\end{align*}
so $\back(\tree)=\{X\}$. So, the optimal strategy is: do not test and just drill.
\end{enumerate}

We found a single maximal strategy. By Corollary~\ref{cor:e:admissibility:from:maximality:backward:induction}, it is also the unique E-admissible strategy. (Our solution differs from Kikuti et al. \cite{2005:kikuti}; since they do not detail their calculations, we could not identify why.) Of course, if the imprecision was larger, we would have found more, but it does show that non-trivial sequential problems can give unique solutions even when probabilities are imprecise.

In this example, the usual normal form method requires comparing 10 gambles at once. By normal form backward induction, we only had to compare 2 gambles at once at each stage (except at the end, where we had 3), leading us much quicker to the solution: the computational benefit of normal form backward induction is obvious.

\section{Conclusion}
\label{sec:conclusion}

When solving sequential decision problems with limited knowledge, it may be impossible to assign a probability to every event. An alternative is to use a coherent lower prevision, or, equivalently, a closed convex set of probability mass functions. Under such a model there are several plausible generalizations of maximizing expected utility.

Given any criterion, we considered two methods of solving decision trees: the usual normal form, where the subject applies the criterion to the lists all strategies, and normal form backward induction, adapted from \cite{2005:kikuti}. If they coincide, backward induction helps efficiently solving trees. If they differ, doubt is cast on the criterion's suitability.

In Theorem~\ref{thm:backopt:normopt:equivalence} we identified when the two methods coincide. We then applied these results to the choice functions for coherent lower previsions. As was already known, $\Gamma$-maximin fails Property~\ref{prop:backward:mixture:property}. Interval dominance fails the same condition. However, and perhaps surprisingly, maximality and E-admissibility satisfy all conditions, in the case where lower probabilities are non-zero. If any lower probabilities are zero, then Property~\ref{prop:backward:mixture:property} fails (unsurprisingly, as in this case it already fails with precise probabilities).

When analysing choice functions, whether for lower previsions or not, usually Property~\ref{prop:backward:mixture:property} is most troublesome. Failing Property~\ref{prop:backward:conditioning:property} would involve an unnatural form of conditioning, and path independence (Property~\ref{prop:path:independence}) is a very natural consistency condition that one usually wants to satisfy before even considering decision trees.

We have not argued that a normal form operator, and $\normoper_{\opt}$ in particular, gives the best solution to a decision tree. A normal form solution requires a policy to be specified and adhered to. The subject does this policy only by her own resolution: she of course can change her policy upon reaching a decision node \cite{1988:seidenfeld}. One might argue that a normal form solution is only acceptable when the subject cannot change her mind (for example, if she instructs, in advance, others to carry out the actions).

Further, many choice functions cause $\normoper_{\opt}$ to have undesirable properties, even when they satisfy Properties~\ref{prop:backward:conditioning:property}, \ref{prop:insensitivity:to:non:optimal:elements}, \ref{prop:preservation:under:addition:of:elements}, and~\ref{prop:backward:mixture:property}. For example, using maximality or $E$-admissibility with a lower prevision allows the subject to choose to pay for apparently irrelevant information instead of making an immediate decision \cite{2007:seidenfeld}, and a gamble that is optimal in a subtree may become non-optimal in the full tree \cite{1988:hammond,2009:huntley:troffaes::isipta,2011:huntley:subtree:perfectness}.

Moreover, normal form backward induction does not always help with computations. The need to store, at every stage, all optimal gambles, could be a burden. Secondly, if imprecision is large, causing only few gambles to be deleted, the set of optimal gambles at each stage will still eventually become too large. In such situations, a form of approximation may be necessary. Even so, we have shown that, perhaps surprisingly, the normal form \emph{can} be solved exactly with backward induction, and when either trees or imprecision are not too large, the method will be computationally feasible.

\bibliography{imprecisetrees} 
\bibliographystyle{amsplainurl}

\newpage

\appendix
\numberwithin{lemma}{section}

\section{Results for General Choice Functions}
\label{sec:appendix}

This appendix details intermediate results required for the proofs in Section~\ref{sec:ip:optimality}. Since the results are applicable for choice functions that are nothing to do with coherent lower previsions, and so may be useful for investigating other uncertainty models, we present them separately.

\begin{proposition}\label{prop:maximality:path:independence}
  For each non-empty event $A$, let $\succ_A$ be any strict partial order on $A$-consistent gambles. The choice function induced by these strict partial orders, that is,
  \begin{equation*}
    \optsucc(\mathcal{X}|A)
    =
    \{X\in\mathcal{X}\colon (\forall Y\in\mathcal{X})(Y\not\succ_A X)\}
  \end{equation*}
  satisfies Properties~\ref{prop:insensitivity:to:non:optimal:elements} and~\ref{prop:preservation:under:addition:of:elements}.
\end{proposition}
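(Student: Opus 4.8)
The plan is to prove the two properties directly from the definition of $\optsucc$, using only that each $\succ_A$ is a strict partial order (in fact, for these two properties, only transitivity and the characterization of optimality as "not dominated by anything in the set" are needed). The key observation is that membership in $\optsucc(\mathcal{X}|A)$ is characterized by a statement quantified over the ambient set, so both properties reduce to tracking how the quantifier's range changes as we enlarge or shrink the set.

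For Property~\ref{prop:preservation:under:addition:of:elements} (preservation of non-optimality under addition), suppose $\mathcal{Y}\subseteq\mathcal{X}$ and take $X\in\optsucc(\mathcal{X}|A)\cap\mathcal{Y}$. Since $X$ is optimal in the larger set, no $Y\in\mathcal{X}$ satisfies $Y\succ_A X$; a fortiori no $Y$ in the smaller set $\mathcal{Y}\subseteq\mathcal{X}$ does either, so $X\in\optsucc(\mathcal{Y}|A)$. This gives the required inclusion $\optsucc(\mathcal{Y}|A)\supseteq\optsucc(\mathcal{X}|A)\cap\mathcal{Y}$ essentially immediately, since shrinking the set can only remove potential dominators.

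For Property~\ref{prop:insensitivity:to:non:optimal:elements} (insensitivity to omission of non-optimal elements), assume $\optsucc(\mathcal{X}|A)\subseteq\mathcal{Y}\subseteq\mathcal{X}$ and prove both inclusions. The inclusion $\optsucc(\mathcal{X}|A)\subseteq\optsucc(\mathcal{Y}|A)$ follows by the same monotonicity argument as above, restricted to elements of $\optsucc(\mathcal{X}|A)$, which lie in $\mathcal{Y}$ by hypothesis. For the reverse inclusion $\optsucc(\mathcal{Y}|A)\subseteq\optsucc(\mathcal{X}|A)$, take $X\in\optsucc(\mathcal{Y}|A)$ and suppose for contradiction that $X\notin\optsucc(\mathcal{X}|A)$: then some $W\in\mathcal{X}$ has $W\succ_A X$. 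Since $X$ is optimal in $\mathcal{Y}$, this $W$ cannot lie in $\mathcal{Y}$, hence $W\notin\optsucc(\mathcal{X}|A)$ (as $\optsucc(\mathcal{X}|A)\subseteq\mathcal{Y}$), so $W$ is itself dominated by some $W'\in\mathcal{X}$, i.e. $W'\succ_A W$. By transitivity $W'\succ_A X$.

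\textbf{The main obstacle} is the last inclusion: the dominator $W$ witnessing non-optimality of $X$ in $\mathcal{X}$ need not itself be optimal, so one cannot directly replace it with an element of $\mathcal{Y}$. The fix is to iterate: finiteness of $\mathcal{X}$ together with irreflexivity and transitivity of $\succ_A$ (which rule out cycles) guarantees that any descending chain $X\prec_A W\prec_A W'\prec_A\cdots$ terminates at a maximal element $W^*\in\optsucc(\mathcal{X}|A)\subseteq\mathcal{Y}$ with $W^*\succ_A X$. This contradicts optimality of $X$ in $\mathcal{Y}$. Formally one argues that the set $\{W\in\mathcal{X}\colon W\succ_A X\}$ is nonempty and finite, hence has a $\succ_A$-maximal element, which must be $\succ_A$-undominated in $\mathcal{X}$ and therefore in $\optsucc(\mathcal{X}|A)$. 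This termination step is where strictness of the partial order and finiteness of $\mathcal{X}$ are genuinely used; the rest is bookkeeping with quantifiers.
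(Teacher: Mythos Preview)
Your proof is correct. The approach differs from the paper's in organization: you prove Properties~\ref{prop:insensitivity:to:non:optimal:elements} and~\ref{prop:preservation:under:addition:of:elements} directly, whereas the paper invokes Lemma~\ref{lemma:opt:of:unions:equality} (Sen's equivalence) to reduce the task to establishing path independence (Property~\ref{prop:path:independence}), and then proves path independence in one sweep. The core analytic step is the same in both arguments: finiteness of the ambient set together with transitivity and irreflexivity of $\succ_A$ ensures that any dominated element is dominated by a $\succ_A$-maximal element, so one can always replace a dominator outside the retained subset by one inside it. Your route is slightly more self-contained, since it does not rely on Sen's lemma; the paper's route has the minor advantage of delivering path independence explicitly, which is sometimes the more convenient form to cite.
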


\begin{proof}
By Lemma~\ref{lemma:opt:of:unions:equality}, it suffices to show that $\optsucc$ is path independent. Let $\mathcal{X}_1$, \dots, $\mathcal{X}_n$ be non-empty finite sets of $A$-consistent gambles, and let $A$ be a non-empty event. Let $\mathcal{X}=\bigcup_{i=1}^n \mathcal{X}_i$ and $\mathcal{Z}=\bigcup_{i=1}^n \optsucc(\mathcal{X}_i | A)$. We show must show that
\begin{equation}
  \label{eq:prop:maximality:path:independence:helper:1}
  \optsucc(\mathcal{X}|A)=\optsucc(\mathcal{Z}|A).
\end{equation}

By definition,
\begin{align*}
  \optsucc(\mathcal{Z}|A) &= \{Z \in\mathcal{Z}\colon(\forall Y\in\mathcal{Z})(Y\not\succ_A Z)\},\\
\intertext{and, observe that, if $X\in\mathcal{X}$ but $X\notin\mathcal{Z}$, by transitivity of $\succ_A$ and finiteness of $\mathcal{X}$, there is a $Y\in\mathcal{Z}$ such that $Y\succ_A X$. Therefore again by transitivity of $\succ_A$, for any $Z\in\mathcal{Z}$ such that $X \succ_A Z$, we have $Y\succ_A Z$. So,}
  &= \{Z \in\mathcal{Z}\colon(\forall Y\in\mathcal{X})(Y\not\succ_A Z)\},\\
\intertext{and once again, by definition of $\mathcal{Z}$, if $X\in\mathcal{X}$ but $X\notin\mathcal{Z}$ there is a $Y\in\mathcal{X}$ such that $Y\succ_A X$, so we have}
  &= \{X \in \mathcal{X}\colon(\forall Y\in\mathcal{X})(Y\not\succ_A X)\} \\
  &= \optsucc(\mathcal{X}|A).
\end{align*}

\end{proof}

\begin{proposition}\label{prop:opt:union:preserves:properties}
  Let $\{\opt_i\colon i\in\mathcal{I}\}$ be a family of choice functions. For any non-empty event $A$ and any non-empty finite set of $A$-consistent gambles $\mathcal{X}$, let
  \begin{equation*}
    \opt(\mathcal{X}|A) = \bigcup_{i \in \mathcal{I}} \opt_i(\mathcal{X}|A).
  \end{equation*}
  \begin{enumerate}[label=(\roman*)]
  \item\label{prop:opt:union:preserves:properties:insensitivity:to:non:optimal:elements}
  If each $\opt_i$ satisfies Property~\ref{prop:insensitivity:to:non:optimal:elements}, then so does $\opt$.
  \item\label{prop:opt:union:preserves:properties:preservation:under:addition:of:elements}
  If each $\opt_i$ satisfies Property~\ref{prop:preservation:under:addition:of:elements}, then so does $\opt$.
  \item\label{prop:opt:union:preserves:properties:backward:mixture:property}
  If each $\opt_i$ satisfies Property~\ref{prop:backward:mixture:property}, then so does $\opt$.
  \item \label{prop:opt:union:preserves:properties:backward:conditioning:property}
  If each $\opt_i$ satisfies Properties~\ref{prop:backward:conditioning:property}, \ref{prop:preservation:under:addition:of:elements}, and~\ref{prop:backward:mixture:property}, then $\opt$ satisfies Property~\ref{prop:backward:conditioning:property}.
  \end{enumerate}
\end{proposition}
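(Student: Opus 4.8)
The plan is to treat the four parts separately, exploiting throughout that $\opt$ is a union and that the relevant set-theoretic operations commute with unions over $i$. Parts (i)--(iii) are routine applications of this principle, whereas part (iv) is the genuine difficulty and is precisely why stronger hypotheses are imposed there.

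For (i), I would suppose $\opt(\mathcal{X}|A)\subseteq\mathcal{Y}\subseteq\mathcal{X}$. Since $\opt_i(\mathcal{X}|A)\subseteq\opt(\mathcal{X}|A)\subseteq\mathcal{Y}$ for each $i$, Property~\ref{prop:insensitivity:to:non:optimal:elements} for $\opt_i$ gives $\opt_i(\mathcal{Y}|A)=\opt_i(\mathcal{X}|A)$, and uniting over $i$ yields the claim. For (ii), $\mathcal{Y}\subseteq\mathcal{X}$ gives $\opt_i(\mathcal{Y}|A)\supseteq\opt_i(\mathcal{X}|A)\cap\mathcal{Y}$ for each $i$; uniting and using the distributivity $\bigcup_i(\opt_i(\mathcal{X}|A)\cap\mathcal{Y})=(\bigcup_i\opt_i(\mathcal{X}|A))\cap\mathcal{Y}$ finishes it. For (iii), the key observation is that the mixture map $\mathcal{S}\mapsto A\mathcal{S}+\compl{A}Z$ acts elementwise, so $A(\bigcup_i\mathcal{S}_i)+\compl{A}Z=\bigcup_i(A\mathcal{S}_i+\compl{A}Z)$; applying Property~\ref{prop:backward:mixture:property} to each $\opt_i$ and uniting then gives $\opt(A\mathcal{X}+\compl{A}Z|B)\subseteq\bigcup_i(A\opt_i(\mathcal{X}|A\cap B)+\compl{A}Z)=A\opt(\mathcal{X}|A\cap B)+\compl{A}Z$.

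The hard part is (iv), and I expect the main obstacle to be an \emph{index mismatch}. From $X\in\opt(\mathcal{X}|A\cap B)$ one only learns that $X\in\opt_k(\mathcal{X}|A\cap B)$ for some $k$, whereas the hypothesis $AX+\compl{A}Z\in\opt(A\mathcal{X}+\compl{A}\mathcal{Z}|B)$ only gives $AX+\compl{A}Z\in\opt_j(A\mathcal{X}+\compl{A}\mathcal{Z}|B)$ for some $j$, and there is no reason that $j=k$. Hence Property~\ref{prop:backward:conditioning:property} for a single $\opt_i$ cannot be invoked directly, which is exactly why the extra Properties~\ref{prop:preservation:under:addition:of:elements} and~\ref{prop:backward:mixture:property} of each $\opt_i$ are required.

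My plan for (iv) is to manufacture a suitable witness inside $\opt_j(\mathcal{X}|A\cap B)$. First, since $Z\in\mathcal{Z}$ we have $A\mathcal{X}+\compl{A}Z\subseteq A\mathcal{X}+\compl{A}\mathcal{Z}$, so Property~\ref{prop:preservation:under:addition:of:elements} for $\opt_j$ yields $AX+\compl{A}Z\in\opt_j(A\mathcal{X}+\compl{A}Z|B)$. Next, Property~\ref{prop:backward:mixture:property} for $\opt_j$ produces some $X'\in\opt_j(\mathcal{X}|A\cap B)$ with $AX'+\compl{A}Z=AX+\compl{A}Z$, hence $AX'=AX=AY$. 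Now $X'$ plays the role of the optimal gamble for $\opt_j$ itself: we have $X'\in\opt_j(\mathcal{X}|A\cap B)$, $AX'=AY$, and $AX'+\compl{A}Z\in\opt_j(A\mathcal{X}+\compl{A}Z|B)$, so Property~\ref{prop:backward:conditioning:property} for $\opt_j$ (applied with $X'$ in the role of $X$ and the singleton $\{Z\}$ in the role of $\mathcal{Z}$) delivers $Y\in\opt_j(\mathcal{X}|A\cap B)\subseteq\opt(\mathcal{X}|A\cap B)$, as required. Along the way I would check the side-conditions: any subset of a consistent set is consistent by condition~\ref{def:gambles:consistent:via:inverse:map}, so $\{Z\}$ inherits $\compl{A}\cap B$-consistency from $\mathcal{Z}$, and the non-emptiness requirements $A\cap B\neq\emptyset$ and $\compl{A}\cap B\neq\emptyset$ carry over unchanged.
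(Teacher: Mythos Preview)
Your proposal is correct and follows essentially the same route as the paper. Parts (i)--(iii) are identical to the paper's argument, and for (iv) both you and the paper pick an index $j$ with $AX+\compl{A}Z\in\opt_j(A\mathcal{X}+\compl{A}\mathcal{Z}\mid B)$, use Properties~\ref{prop:preservation:under:addition:of:elements} and~\ref{prop:backward:mixture:property} to produce an element of $\opt_j(\mathcal{X}\mid A\cap B)$ agreeing with $X$ on $A$, and then invoke Property~\ref{prop:backward:conditioning:property} for $\opt_j$; the only cosmetic difference is that you first shrink $\mathcal{Z}$ to the singleton $\{Z\}$ before applying Property~\ref{prop:backward:mixture:property}, whereas the paper states the combined inclusion $\opt_j(A\mathcal{X}+\compl{A}\mathcal{Z}\mid B)\subseteq A\opt_j(\mathcal{X}\mid A\cap B)+\compl{A}\opt_j(\mathcal{Z}\mid\compl{A}\cap B)$ directly and keeps the original $\mathcal{Z}$ as the witness set.
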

\begin{proof}
  \ref{prop:opt:union:preserves:properties:insensitivity:to:non:optimal:elements}.
  By definition of $\opt$ and by assumption, for any finite non-empty sets of gambles $\mathcal{X}$ and $\mathcal{Y}$ such that $\mathcal{Y}\subseteq\mathcal{X}$ and for any $i \in \mathcal{I}$, $\opt_i(\mathcal{X}|A)\subseteq\opt(\mathcal{X}|A)\subseteq\mathcal{Y}$, and therefore by Property~\ref{prop:insensitivity:to:non:optimal:elements}, $\opt_i(\mathcal{X}|A)=\opt_i(\mathcal{Y}|A)$. Whence,
 \begin{equation*}
  \opt(\mathcal{Y}|A) = \bigcup_{i \in \mathcal{I}} \opt_i(\mathcal{Y}|A) = \bigcup_{i \in \mathcal{I}} \opt_i(\mathcal{X}|A) = \opt(\mathcal{X}|A).
 \end{equation*}
  
  \ref{prop:opt:union:preserves:properties:preservation:under:addition:of:elements}.
  By assumption, for any finite non-empty sets of gambles $\mathcal{X}$ and $\mathcal{Y}$ such that $\mathcal{Y} \subseteq \mathcal{X}$ and for any $i \in \mathcal{I}$, $\opt_i(\mathcal{Y}|A) \supseteq \opt_i(\mathcal{X}|A) \cap \mathcal{Y}$. Therefore,
  \begin{align*}
    \opt(\mathcal{Y}|A) &= \bigcup_{i \in \mathcal{I}} \opt_i(\mathcal{Y}|A) \supseteq \bigcup_{i \in \mathcal{I}} (\opt_i(\mathcal{X}|A)\cap\mathcal{Y}) \\
    &= \mathcal{Y} \cap \bigcup_{i \in \mathcal{I}} \opt_i(\mathcal{X}|A) = \opt(\mathcal{X}|A)\cap\mathcal{Y}.
  \end{align*}
  
  \ref{prop:opt:union:preserves:properties:backward:mixture:property}. By assumption, for any non-empty finite set of gambles $\mathcal{X}$, any gamble $Z$, any events $A$ and $B$ such that $A \cap B \neq \emptyset$, and for any $i \in \mathcal{I}$,
 \begin{equation*}
  \opt_i(A\mathcal{X} \setplus \compl{A}Z|B) \subseteq A\opt_i(\mathcal{X}|A\cap B) \setplus \compl{A} Z,
 \end{equation*}
 whence
 \begin{align*}
  \opt(A\mathcal{X}\setplus\compl{A}Z|B) &= \bigcup_{i \in \mathcal{I}}\opt_i(A\mathcal{X}\setplus\compl{A}Z|B) \\
  &\subseteq \bigcup_{i \in \mathcal{I}}(A\opt_i(\mathcal{X}|A\cap B) \setplus \compl{A} Z) \\
  &= \compl{A}Z\setplus A \bigcup_{i \in \mathcal{I}}\opt_i(\mathcal{X}|A\cap B) \\
  &= \compl{A}Z \setplus A\opt(\mathcal{X}|B).
 \end{align*}
 
 \ref{prop:opt:union:preserves:properties:backward:conditioning:property}. Let $A$ and $B$ be events such that $A\cap B\neq\emptyset$ and $\compl{A}\cap B\neq\emptyset$, $\mathcal{Z}$ be a non-empty finite set of $\compl{A}\cap B$-consistent gambles, and $\mathcal{X}$ be a non-empty finite set of $A\cap B$-consistent gambles such that there is $\{X,Y\}\subseteq \mathcal{X}$ with $AX=AY$. Suppose that there is a $Z\in\mathcal{Z}$ such that $AX+\compl{A}Z\in\opt(A\mathcal{X} + \compl{A}\mathcal{Z} | B)$.
 
 By definition of $\opt$, there is a $j$ such that $AX+\compl{A}Z\in\opt_j(A\mathcal{X} + \compl{A}\mathcal{Z} | B)$. We show that both $X$ and $Y$ are in $\opt_j(\mathcal{X}|A\cap B)$, and therefore are both in $\opt(\mathcal{X}|A\cap B)$. It follows from Properties~\ref{prop:preservation:under:addition:of:elements} and~\ref{prop:backward:mixture:property} that
 \begin{equation*}
   \opt_j(A\mathcal{X} + \compl{A}\mathcal{Z} | B) \subseteq A\opt_j(\mathcal{X}|A\cap B) +\compl{A}\opt_j(\mathcal{Z}|\compl{A}\cap B).
 \end{equation*}
 Therefore, there is a $V\in\opt_j(\mathcal{X}|A\cap B)$ with $AV=AX$. Finally, $\opt_j$ satisfies Property~\ref{prop:backward:conditioning:property}, and therefore both $X$ and $Y$ must be in $\opt_j(\mathcal{X}|A\cap B)$. This establishes Property~\ref{prop:backward:conditioning:property} for $\opt$.

\end{proof}

The final result is the following: if $\opt_1$ satisfies the necessary properties, $\opt_2$ does not, but $\opt_1 \subseteq \opt_2$, then we can use $\opt_1(\back_{\opt_2}(\cdot))$ to find $\normoper_{\opt_1}$. This could be of interest in situations where $\opt_2$ is much more computationally efficient than $\opt_1$, and still eliminates enough gambles to be useful.

\begin{theorem}\label{thm:backopt1:subseteq:backopt2}
 Let $\opt_1$ and $\opt_2$ be choice functions such that $\opt_1$ satisfies Properties~\ref{prop:backward:conditioning:property}, \ref{prop:insensitivity:to:non:optimal:elements}, \ref{prop:preservation:under:addition:of:elements}, and~\ref{prop:backward:mixture:property}, and for any non-empty event $A$ and any non-empty finite set of $A$-consistent gambles $\mathcal{X}$,
 \begin{equation*}
  \opt_1(\mathcal{X}|A)\subseteq\opt_2(\mathcal{X}|A).
 \end{equation*}
 Then, for any consistent decision tree $\tree$,
\begin{equation}
\label{eq:backopt1:subseteq:backopt2:subset}
 \normoper_{\opt_1}(\tree) = \normoper_{\opt_1}(\back_{\opt_2}(\tree)).
\end{equation}
\end{theorem}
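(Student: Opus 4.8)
The plan is to reduce the identity~\eqref{eq:backopt1:subseteq:backopt2:subset} to a single \emph{monotonicity} statement for backward induction: that the hypothesis $\opt_1(\cdot|\cdot)\subseteq\opt_2(\cdot|\cdot)$ forces $\back_{\opt_1}(\tree)\subseteq\back_{\opt_2}(\tree)$ (as normal form solutions). Write $A=\treeevent{\tree}$ and, for brevity, $G_\opt=\normgambles\circ\back_\opt$. Granting the monotonicity, Theorem~\ref{thm:backopt:normopt:equivalence} (applicable to $\opt_1$, which satisfies all four properties) gives $\back_{\opt_1}(\tree)=\normoper_{\opt_1}(\tree)$, so $\normoper_{\opt_1}(\tree)\subseteq\back_{\opt_2}(\tree)\subseteq\nfd(\tree)$ and, by~\eqref{eq:gambofnormoptisopt}, $\opt_1(\normgambles(\tree)|A)=\normgambles(\normoper_{\opt_1}(\tree))\subseteq\normgambles(\back_{\opt_2}(\tree))\subseteq\normgambles(\tree)$. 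All three gamble sets are $A$-consistent, since condition~\ref{def:gambles:consistent:via:inverse:map} of Definition~\ref{def:gambles:consistent} is inherited by subsets; hence Property~\ref{prop:insensitivity:to:non:optimal:elements} for $\opt_1$ yields $\opt_1(\normgambles(\back_{\opt_2}(\tree))|A)=\opt_1(\normgambles(\tree)|A)=:\mathcal{O}$. Both sides of~\eqref{eq:backopt1:subseteq:backopt2:subset} are then exactly $\{\atree:\normgambles(\atree)\subseteq\mathcal{O}\}$, ranging over $\nfd(\tree)$ on the left and over $\back_{\opt_2}(\tree)$ on the right, so the chain $\normoper_{\opt_1}(\tree)\subseteq\back_{\opt_2}(\tree)\subseteq\nfd(\tree)$ forces the two to coincide.

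It remains to prove the monotonicity, which I would do by structural induction on $\tree$ using the recursive description of $\back_\opt$ (made rigorous in~\cite{2009:huntley:troffaes::isipta,2011:huntley:subtree:perfectness}): $G_\opt(\bigdecnodeunion_i\tree_i)=\opt(\bigcup_i G_\opt(\tree_i)|A)$ and $G_\opt(\bigchancenodemixture_i\event_i\tree_i)=\opt(\sum_i\event_i G_\opt(\tree_i)|A)$. I would first establish the gamble-level inclusion $G_{\opt_1}(\tree)\subseteq G_{\opt_2}(\tree)$, the inductive hypothesis being $G_{\opt_1}(\tree_i)\subseteq G_{\opt_2}(\tree_i)$ for every child. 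The decisive idea is to make \emph{both} choice functions act on the \emph{same} larger set and to invoke $\opt_1\subseteq\opt_2$ only at the last step. At a decision node, set $\mathcal{V}=\bigcup_i G_{\opt_2}(\tree_i)$. Theorem~\ref{thm:backopt:normopt:equivalence} on the children gives $G_{\opt_1}(\tree_i)=\opt_1(\normgambles(\tree_i)|A)$, and since $G_{\opt_1}(\tree_i)\subseteq G_{\opt_2}(\tree_i)\subseteq\normgambles(\tree_i)$, Property~\ref{prop:insensitivity:to:non:optimal:elements} gives $\opt_1(G_{\opt_2}(\tree_i)|A)=G_{\opt_1}(\tree_i)$; path independence (Property~\ref{prop:path:independence}, available through Lemma~\ref{lemma:opt:of:unions:equality}) then yields $\opt_1(\mathcal{V}|A)=\opt_1(\bigcup_i G_{\opt_1}(\tree_i)|A)=G_{\opt_1}(\tree)$. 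Thus $G_{\opt_1}(\tree)=\opt_1(\mathcal{V}|A)\subseteq\opt_2(\mathcal{V}|A)=G_{\opt_2}(\tree)$.

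The chance node is the crux, and I expect it to be the main obstacle. Mimicking the above I need $\opt_1(\sum_i\event_i G_{\opt_2}(\tree_i)|A)=\opt_1(\sum_i\event_i G_{\opt_1}(\tree_i)|A)$, which through Property~\ref{prop:insensitivity:to:non:optimal:elements} requires the \emph{push-in} inclusion $\opt_1(\sum_i\event_i\mathcal{H}_i|A)\subseteq\sum_i\event_i\opt_1(\mathcal{H}_i|A\cap\event_i)$. This is precisely the $n$-cell, set-valued strengthening of the backward mixture Property~\ref{prop:backward:mixture:property}, whereas that property as stated covers only a two-cell partition $\{A,\compl{A}\}$ with a single gamble $Z$ on $\compl{A}$. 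The real work is to obtain this strengthening, either by peeling off one event at a time (treating each $\event_i$ against the remaining events and recombining with path independence) or by appealing to the recursive chance-node machinery already used to prove Theorem~\ref{thm:backopt:normopt:equivalence}. Once it is in hand, combining it with $\opt_1(G_{\opt_2}(\tree_i)|A\cap\event_i)=G_{\opt_1}(\tree_i)$ and Property~\ref{prop:insensitivity:to:non:optimal:elements} gives the required equality, and $\opt_1\subseteq\opt_2$ on the common set $\sum_i\event_i G_{\opt_2}(\tree_i)$ closes the step exactly as in the decision case.

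Finally I would upgrade this gamble-level inclusion to the decision level, i.e.\ $\back_{\opt_1}(\tree)\subseteq\back_{\opt_2}(\tree)$ as sets of normal form decisions, which is what the reduction in the first paragraph actually used. This is a short add-on to the same induction: if $\atree\in\back_{\opt_1}(\tree)$, then each of its substrategies lies in some $\back_{\opt_1}(\tree_i)\subseteq\back_{\opt_2}(\tree_i)$ (inductive hypothesis at the decision level), while its gamble lies in $G_{\opt_1}(\tree)\subseteq G_{\opt_2}(\tree)$ (the gamble-level result just proved); by the recursive description of $\back_{\opt_2}$, these two facts together place $\atree$ in $\back_{\opt_2}(\tree)$. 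It is worth stressing that this route never needs $\back_{\opt_2}$ to retain \emph{all} strategies inducing a surviving gamble --- a gamble-completeness that may genuinely fail, since $\opt_2$ is not assumed to satisfy Property~\ref{prop:backward:conditioning:property} --- because arguing monotonicity directly at the decision level sidesteps that issue.
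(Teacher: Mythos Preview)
The paper states this theorem at the very end of the appendix \emph{without proof}, so there is no original argument to compare against here. Your overall strategy---prove the monotonicity $\back_{\opt_1}(\tree)\subseteq\back_{\opt_2}(\tree)$ by structural induction, then combine with Theorem~\ref{thm:backopt:normopt:equivalence} and Property~\ref{prop:insensitivity:to:non:optimal:elements}---is correct, and the reduction in your first paragraph is clean.

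One simplification is worth pointing out: at the chance-node step you do not actually need the push-in inclusion (the $n$-event, set-valued strengthening of Property~\ref{prop:backward:mixture:property}) that you flag as the main obstacle. Since $\opt_1$ satisfies all four properties, you may invoke Theorem~\ref{thm:backopt:normopt:equivalence} not only on the children $\tree_i$ but on $\tree$ itself, giving $G_{\opt_1}(\tree)=\opt_1(\normgambles(\tree)\,|\,A)$. Combining this with the recursive definition of $\back_{\opt_1}$ and the inductive hypothesis yields the sandwich
\[
\opt_1(\normgambles(\tree)\,|\,A)=G_{\opt_1}(\tree)\subseteq\sum_i E_i\,G_{\opt_1}(\tree_i)\subseteq\sum_i E_i\,G_{\opt_2}(\tree_i)\subseteq\normgambles(\tree),
\]
so Property~\ref{prop:insensitivity:to:non:optimal:elements} immediately gives $\opt_1\bigl(\sum_i E_i\,G_{\opt_2}(\tree_i)\,\big|\,A\bigr)=G_{\opt_1}(\tree)$, and the hypothesis $\opt_1\subseteq\opt_2$ on this common set finishes the step exactly as in your decision-node argument. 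This bypasses having to re-derive the chance-node machinery behind Theorem~\ref{thm:backopt:normopt:equivalence}, which is effectively what establishing the push-in inclusion would amount to; the same trick also streamlines the decision-node step, making the induction uniform. Your decision-level upgrade in the final paragraph is fine as written.
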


\end{document}